\def\TITLE{\bf Regularity of quantum $\tau$-functions
generated by quantum birational Weyl group actions}
\def\AUTHOR{Gen Kuroki\thanks{\ADDRESS.}}
\def\DATE{June 15, 2012}
\def\ADDRESS{Mathematical Institute, Tohoku University, Sendai 980-8578, Japan}
\def\ABSTRACT{
We canonically quantize the $\tau$-functions for 
the birational Weyl group action arising from a nilpotent
Poisson algebra proposed by Noumi and Yamada.
We also construct the $q$-difference deformation 
of the canonical quantization of the $\tau$-functions. 
Using the translation functors for the symmetrizable Kac-Moody algebras,
we prove the regularity of the quantum $\tau$-functions,
namely, we show that the quantum $\tau$-functions   
are polynomials in dependent variables.
}
\def\ACKNOWLEDGEMENTS{
The author would like to thank Koji Hasegawa for valuable discussions
on the topics of this paper.
This work was supported by Grant-in-Aid for Scientific Research No.~23540003.
}
\newcommand\arxivref[1]{\href{http://arxiv.org/abs/#1}{\tt arXiv:#1}}
\newcommand\bra{\langle}
\newcommand\ket{\rangle}
\newcommand\Hom{\mathop{\mathrm{Hom}}\nolimits}
\newcommand\A{{\mathcal A}}
\renewcommand\O{{\mathcal O}}
\newcommand\qbinom{\genfrac[]\z@{}}\makeatother
\newcommand\ad{\mathop{\mathrm{ad}}\nolimits}
\newcommand\Ad{\mathop{\mathrm{Ad}}\nolimits}
\newcommand\av{\alpha^\vee}
\newcommand\eps{\varepsilon}
\newcommand\epsv{\eps^\vee}
\newcommand\deltav{\delta^\vee}
\newcommand\Qv{Q^\vee}
\newcommand\tW{{\widetilde W}}
\newcommand\tw{{\widetilde w}}
\newcommand\ts{{\tilde s}}
\newcommand\tU{{\widetilde U}}
\newcommand\tA{{\widetilde A}}
\newcommand\pr{\mathop{\mathrm{pr}}\nolimits}
\newcommand\g{{\mathfrak g}}
\newcommand\h{{\mathfrak h}}
\newcommand\n{{\mathfrak n}}
\renewcommand\setminus{\smallsetminus}
\renewcommand\d{\partial}
\newcommand\pa{{\mathrm{pa}}}
\newcommand\bs{{\mathbf s}}
\newcommand\be{{\mathbf e}}
\newcommand\intpart{P}
\newcommand\Oint{\O_\intpart}
\newcommand\Ointh{\O_{\hbar,\intpart}}
\newcommand\Ointg{\Oint^{\mathrm{g}}}
\newcommand\Kint{K_{\intpart}}
\newcommand\Kintg{\Kint^{\mathrm{g}}}
\newcommand\Ob{\mathop{\mathrm{Ob}}\nolimits}
\newcommand\Peq[1]{\mathrm{P}_{\mathrm{#1}}}
\newcommand\PII{\Peq{II}}
\newcommand\PIII{\Peq{III}}
\newcommand\PIV{\Peq{IV}}
\newcommand\PV{\Peq{V}}
\newcommand\PVI{\Peq{VI}}
\newcommand\Z{{\mathbb Z}} 
\newcommand\C{{\mathbb C}} 
\theoremstyle{plain} 
\newtheorem{theorem}{Theorem}
\newtheorem*{theorem*}{Theorem}
\newtheorem{prop}[theorem]{Proposition}
\newtheorem*{prop*}{Proposition}
\newtheorem{lemma}[theorem]{Lemma}
\newtheorem*{lemma*}{Lemma}
\newtheorem*{cor*}{Corollary}
\newtheorem*{axiom*}{Axiom}
\newtheorem*{problem*}{Problem}
\newtheorem*{summary*}{Summary}
\newtheorem*{guide*}{Guide}
\theoremstyle{definition} 
\newtheorem{definition}[theorem]{Definition}
\newtheorem*{definition*}{Definition}
\theoremstyle{definition} 
\newtheorem{remark}[theorem]{Remark}
\newtheorem*{remark*}{Remark}
\newtheorem{example}[theorem]{Example}
\newtheorem*{example*}{Example}
\numberwithin{theorem}{section}
\numberwithin{equation}{section}
\numberwithin{figure}{section}
\numberwithin{table}{section}
\newcommand\secref[1]{Section \ref{#1}}
\newcommand\theoremref[1]{Theorem \ref{#1}}
\newcommand\propref[1]{Proposition \ref{#1}}
\newcommand\lemmaref[1]{Lemma \ref{#1}}
\newcommand\exampleref[1]{Example \ref{#1}}
\newcommand\definitionref[1]{Definition \ref{#1}}
\newcommand\remarkref[1]{Remark \ref{#1}}
\renewenvironment{proof}[1][\proofname]{\par
  \normalfont
  \topsep6\p@\@plus6\p@ \trivlist
  \item[\hskip\labelsep{\bfseries #1}\@addpunct{\bfseries.}]\ignorespaces
}{%
  \endtrivlist
}
\renewcommand{\proofname}{Proof}
\def\BOXSYMBOL{\RIfM@\bgroup\else$\bgroup\aftergroup$\fi
  \vcenter{\hrule\hbox{\vrule height.85em\kern.6em\vrule}\hrule}\egroup}
\newcommand{\BOX}{%
  \ifmmode\else\leavevmode\unskip\penalty9999\hbox{}\nobreak\hfill\fi
  \quad\hbox{\BOXSYMBOL}}
\renewcommand\qed{\BOX}
\begin{document}
\title{\TITLE}
\author{\AUTHOR}
\date{\DATE}
\maketitle
\begin{abstract}
  \ABSTRACT
\end{abstract}
\tableofcontents
\setcounter{section}{-1} 

\section{Introduction}


In the previous paper \cite{Kuroki2008}, the author canonically quantized 
the birational Weyl group action arising from a nilpotent
Poisson algebra proposed by Noumi and Yamada \cite{NY0012028}.
But he did not quantize their $\tau$-functions.
In this paper, we shall quantize the $\tau$-functions and
prove that the quantized $\tau$-functions are polynomials in dependent variables.

Let $[a_{ij}]_{i,j\in I}$ be any symmetrizable generalized Cartan matrix
(GCM for short) with positive integers $d_i$ ($i\in I$) satisfying $d_i a_{ij}=d_j a_{ji}$.
Denote by $W$ the Weyl group of the GCM $[a_{ij}]_{i,j\in I}$
generated by the simple reflections $s_i$ ($i\in I$).


\subsection{Classical case}

Following Noumi and Yamada \cite{NY0012028}, we define a nilpotent Poisson algebra 
to be a Poisson commutative integral domain generated by $\{f_i\}_{i\in I}$ 
as a Poisson algebra with the following nilpotency property of the Poisson bracket:
\begin{equation*}
 (\ad_{\{,\}} f_i)^{1-a_{ij}}(f_j)
 = \underbrace{\{f_i,\{\cdots,\{f_i,\{f_i}_{\text{$1-a_{ij}$ times}},f_j\}\}\cdots\}\}
 = 0
 \quad (i\ne j),
\end{equation*}
where $(\ad_{\{,\}} f)(g)=\{f,g\}$.
We call $f_i$'s {\em the dependent variables}.

In Theorem 1.1 of \cite{NY0012028}, 
introducing the Poisson central {\em parameter variables} $\av_i$ ($i\in I$), 
they construct the birational Weyl group action by
\begin{align}
 &
 s_i(\av_j) = \av_j-a_{ij}\av_i, \quad
 s_i(f_i)=f_i,
 \notag 
 \\ &
 s_i(f_j) 
 = \exp(\ad_{\{,\}}\av_i\log f_i)(f_j)
 = \sum_{k=0}^{-a_{ij}} \frac{(\av_i)^k}{k!}(\ad_{\{,\}} f_i)^k(f_j) f_i^{-1}
 \quad (i\ne j).
 \label{eq:s_i(f_j)-classical}
\end{align}
These formulas shall be canonically quantized by 
\eqref{eq:s_i(beta)}, 
\eqref{eq:s_i(f_i)}, and 
\eqref{eq:s_i(f_j)-KM}, respectively.

Moreover, in Theorem 1.2 of \cite{NY0012028}, 
they introduce {\em Laurent $\tau$-monomials} $\tau^\mu$ 
for integral weights $\mu\in P$ 
and extend the birational Weyl group action to the $\tau$-monomials by
\begin{equation}
 s_i(\tau^\mu) 
 = f_i^{\bra\av_i,\lambda\ket} \tau^{s_i(\mu)}
 = f_i^{\bra\av_i,\lambda\ket} \tau^{\mu-\bra\av_i,\mu\ket\alpha_i},
 \label{eq:s_i(tau^mu)-classical1}
\end{equation}
where $\av_i$'s are identified with the simple coroots, 
$\alpha_i$'s are the simple roots, 
and $\bra\,,\,\ket$ denotes the canonical pairing between the coroot lattice $\Qv$
and the weight lattice $P$.
This action on the Laurent $\tau$-monomials shall
be quantized by \eqref{eq:s_i(tau^mu)}, the appearance of which is same as
\eqref{eq:s_i(tau^mu)-classical1}.

They deal with the $\tau$-cocycle in \cite{NY0012028}.
However, for the compatibility with the quantum case, 
we equivalently introduce {\em the $\tau$-functions} $\tau_{(w(\mu))}$ 
for $w(\mu)\in WP_+$ by
\begin{equation*}
 \tau_{(w(\mu))} = w(\tau^\mu) \quad (w\in W, \mu\in P_+).
\end{equation*}
Note that each $w(\tau^\mu)$ depends only on $w(\mu)$ 
because $s_i(\tau^\mu)=\tau^\mu$ if $\bra\av_i,\mu\ket=0$.
For any $w(\mu)\in WP_+$, there exists a unique rational function $\phi_w(\mu)$
of $f_i$'s and $\av_i$'s such that $\tau_{(w(\mu))}=\phi_w(\mu)\tau^{w(\mu)}$, 
where $\phi_w(\mu)$ is called {\em the $\tau$-cocycle} in \cite{NY0012028}.
Since the quantized version of $\phi_w(\mu)$ does not commute 
with the quantized $\tau$-monomials in general, 
we shall deal with the quantized version of $\tau_{(w(\mu))}$ in \secref{sec:QTF}.

In this paper, for the fundamental weights $\Lambda_i$, 
we call $\tau_i=\tau^{\Lambda_i}$ {\em the $\tau$-variables}.
Although they call only $\tau_i$'s the $\tau$-functions in \cite{NY0012028},
we call all $\tau_{(w(\mu))}$ ($w\in W$, $\mu\in P_+$) the $\tau$-functions. 
One should not be confused by the difference of terminologies.

The main result of \cite{NY0012028} is the regularity of $\phi_w(\mu)$ 
for any dominant integral weight $\mu\in P_+$ (Theorem 1.3 of \cite{NY0012028}).
In other words, they prove that, for any $\mu\in P_+$ and any $w\in W$, 
the $\tau$-function $\tau_{w(\mu)}$ is a polynomial in $f_i$'s and $\av_i$'s.

In \cite{Yamada9808002}, one of the author of $\cite{NY0012028}$ finds
the determinant formulas of the $\tau$-functions 
for the birational Weyl group actions of type $A^{(1)}_{n-1}$ and $A_\infty$,
and interprets them as Pl\"ucker coordinates of
the universal Grassmann manifolds in the Sato theory of soliton equations \cite{Sato-Sato}.
The determinant formulas immediately lead to 
the regularity of the $\tau$-functions of type $A$.
In \cite{NY0012028}, they generalize the Sato theoretic interpretation of 
the $A$-type $\tau$-functions to the case for any symmetrizable GCM
and show the regularity of the $\tau$-functions for any type.

The regularity of the $\tau$-functions proved by Noumi and Yamada \cite{Yamada9808002}, \cite{NY0012028}
has many corollaries which state polynomiality of certain special rational functions. 
In particular, polynomialities of rational functions, 
which give special solutions of the bilinear forms of the Painlev\'e equations 
and are generated by the B\"acklund transformations, 
are corollaries of the regularity of the $\tau$-functions
for the birational Weyl group actions.

For example, let $Q_m(x)$ ($m\in\Z_{\geqq0})$ be the rational functions 
defined by the following recurrence equation:
\begin{equation}
 Q_{m-1} Q_{m+1} = Q_m'' Q_m - (Q_m')^2 + (x^2+2m-1)Q_m^2,
 \quad Q_0=Q_1=1.
\label{eq:Qm}
\end{equation}
In \cite{OkamotoIII}, 
using the analysis of the Painlev\'e equations in \cite{Okamoto1981},
Okamoto proves that all $Q_m(x)$ are polynomials in $x$
(Proposition 5.6 of \cite{OkamotoIII}).
The polynomials $Q_m(x)$ are called the Okamoto polynomials.
The polynomiality of $Q_m(x)$ is a corollary of the regularity of
the $\tau$-functions for the birational Weyl group action of type $A^{(1)}_2$
(Theorem 4.3 of \cite{NY9708018}).
It is non-trivial to show that 
the right-hand side of the recurrence equation \eqref{eq:Qm} 
is divisible by $Q_{m-1}(x)$.
The original proof of Okamoto is not purely algebraic. 
On the other hand, 
the regularity of $\tau$-functions for the birational Weyl group action
has a purely algebraic proof.
For other examples of special polynomials for the Painlev\'e equations, 
see also \cite{Yamada-SP} and references therein.


\subsection{Quantization}

In this paper, we shall introduce the quantum $\tau$-functions 
(\secref{sec:def-tau}) and prove their regularity.  
But the method to prove the regularity is completely different
from the one in the classical case of \cite{NY0012028}.
In order to prove the regularity of the quantum $\tau$-functions, 
we shall use the translation functors in the representation theory
(\secref{sec:reg-KM}). 

The regularity in the quantized case implies the regularity in the classical case
through the classical limit.
Therefore we obtain another purely algebraic proof of the regularity 
of the classical $\tau$-functions.
In particular, 
the polynomiality of the special rational functions 
for the Painlev\'e equations generated by the B\"acklund transformations 
can be derived from the theory of the translation functors for the Kac-Moody algebras.
This could be a surprising relationship.

We summarize the implications as below:
\begin{align*}
 &
 \text{
   $\exists$ exact functor 
   $T(M)\subset M\otimes L(\mu)$ with 
   $T(M(w\circ\lambda))=M(w\circ(\lambda+\mu))$
 }
 \\ &
 \implies
 \text{regularity of the quantum $\tau$-functions}
 \\ &
 \implies
 \text{regularity of the classical $\tau$-functions}
 \\ &
 \implies
 \text{polynomiality of special rational solutions of the Painlev\'e equations}.
\end{align*}
Here we denote 
by $w\circ\lambda$ the shifted action of $w\in W$ on $\lambda\in P_+$, 
by $M(w\circ\lambda)$ the Verma module with highest weight $w\circ\lambda$, 
by $L(\mu)$ the simple quotient of $M(\mu)$ for $\mu\in P_+$,
and by $T=T_\lambda^{\lambda+\mu}$ the translation functor.

The major difficulty of quantizing the $\tau$-functions was the fact that
we did not have a natural Poisson algebra which contains the $\tau$-variables $\tau_i$
(or the $\tau$-monomials $\tau^\mu$).
Roughly speaking, ``canonical quantization'' stands for
replacement of the Poisson brackets in a Poisson algebra
with the commutators in the corresponding non-commutative associative algebra.
But we did not have Poisson brackets for the $\tau$-variables.
We should find the appropriate commutation relations for the quantum $\tau$-variables
under the situation where the Poisson brackets are unknown.

The answer is very simple. 
The $\tau$-variables $\tau_i$ ($i\in I$) are defined 
to be the exponentials of the canonical conjugate variables 
of the parameter variables $\av_i$ ($i\in I$).
More precisely, we define $\tau_i$ by $\tau_i = \exp(\d/\d\av_i)$ ($i\in I$). 
Then we have $\tau_i\tau_j=\tau_j\tau_i$, $\tau_i\av_j=(\av_j+\delta_{ij})\tau_i$ ($i,j\in I$)
and $\tau_i f_j = f_j \tau_i$.
The quantum $\tau$-variables are difference operators 
of the parameter variables.
More generally we assume that $\tau^\lambda\tau^\mu=\tau^{\lambda+\mu}$, 
$\tau^\mu\av_j=(\av_i+\bra\av_i,\mu\ket)\tau^\mu$,
and $\tau^\mu f_j=f_j\tau^\mu$
for integral weights $\mu,\lambda\in P$.

In the classical case, 
we assume that $\{\tau^\lambda,\tau^\mu\}=0$,
$\{\tau^\mu,\av_j\}=\bra\av_i,\mu\ket\tau^\mu$, and $\{\tau_i,f_j\}=0$.
Then the formula \eqref{eq:s_i(tau^mu)-classical1} can be derived as follows: 
\begin{align}
 s_i(\tau^\mu) 
 &
 = \exp(\ad_{\{,\}} \av_i \log f_i)(\tau^{s_i(\mu)})
 = \exp(\bra\av_i,\mu\ket \log f_i)\tau^{s_i(\mu)}
 = f_i^{\bra\av_i,\mu\ket}\tau^{s_i(\mu)}.
 \label{eq:s_i(tau^mu)-classical2}
\end{align}
See also \eqref{eq:s_i(f_j)-classical}.
Therefore the birational action of $s_i$ is uniformly written in the form 
$s_i(a) = \exp(\ad \av_i\log f_i)(\ts_i(a))$,
where $\ts_i$ stands for the Weyl group action on the parameter variables
(or coroots) and the integral weights which trivially acts on $f_i$'s.

In the quantum case, we shall construct the quantum birational Weyl group action by
$s_i(a)=f_i^{\av_i} \ts_i(a) f_i^{-\av_i}$, 
where $f_i$'s are the generators of 
the associative algebra the fundamental relations of which
are the Serre (or $q$-Serre) relations,
and are called the (quantum) dependent variables.
(For details, see \secref{sec:A}.)
This is an almost straightforward canonical quantization 
of the classical birational Weyl group action $s_i(a) = \exp(\ad \av_i\log f_i)(\ts_i(a))$.

The fractional powers $f_i^{\av_i}$ shall be constructed in \secref{sec:f^beta}.
The parameter variables are identified with the simple coroots also in the quantum case. 
For $\lambda\in P$ and a function $a$ of the parameter variables $\av_i$ ($i\in I$) 
which contains the fractional powers of $f_i$ ($i\in I$), 
we denote by $\phi_\lambda(a)$ the value of $a$ at $\lambda$, 
which is obtained by the substitution of $\bra\av_i,\lambda\ket$ into $\av_i$.
We shall define the fractional powers so that $a=0$ if and only if $\phi_\lambda(a)=0$
for all integral weights $\lambda\in P$.
For any rational function $a$ of $f_i$'s and $\av_i$'s,
it is sufficient for $a=0$ that $\phi_{\lambda+\rho}(a)=0$ 
for all dominant integral weights $\lambda\in P_+$,
where $\rho$ denotes the Weyl vector $\sum_{i\in I}\Lambda_i$.
Therefore the calculations of the quantum birational Weyl group action
reduces to those of 
$f_i^{\bra\av_i,\lambda+\rho\ket} \phi_{\lambda+\rho}(a) f_i^{-\bra\av_i,\lambda+\rho\ket}$ 
for dominant integral weights $\lambda\in P_+$.

For the proof of the regularity of the quantum $\tau$-functions, 
we can assume that $f_i$'s generate the lower triangular part of $U(\g)$ or $U_q(\g)$,
where $\g$ is the Kac-Moody Lie algebra of type $[a_{ij}]_{i,j\in I}$.
Let $M(\lambda)$ be the Verma module over $U(\g)$ or $U_q(\g)$ 
with highest weight $\lambda\in P$.
Denote by $v_\lambda$ a highest weight vector of $M(\lambda)$.
Then, for each $\lambda\in P$ with $\bra\av_i,\lambda\ket\geqq0$, 
the vector $f_i^{\bra\av_i,\lambda+\rho\ket}v_\lambda$ 
is a singular vector with weight $s_i\circ\lambda=s_i(\lambda+\rho)-\rho$.
In this way, we can relate the quantum birational Weyl group action 
with the singular vectors in the Verma modules with dominant integral
highest weights.
For details, see \secref{sec:sing}.

Consequently, we can reduce the regularity of the quantum 
$\tau$-functions to the divisibility (from the right) of the singular vectors
in $M(\lambda+\mu)$ by the corresponding singular vectors in $M(\lambda)$ 
for any $\lambda,\mu\in P_+$ (\propref{prop:reduce}).
The translation functor $T_\lambda^{\lambda+\mu}$ 
for the symmetrizable Kac-Moody algebra $\g$
makes a connection between the singular vectors 
in $M(\lambda)$ and those in $M(\lambda+\mu)$
and then proves the divisibility of the singular vectors
in the Kac-Moody case (\secref{sec:reg-KM}).
The equivalence (shown by Etingof and Kazhdan in \cite{EK-VI})
between the category $\O$ for $\g$ and that of $U_q(\g)$
shows that the divisibility of the singular vectors 
in the Verma modules over $U_q(\g)$
also reduces to the theory of the translation functor 
for the symmetrizable Kac-Moody algebra (\secref{sec:reg-q}).
Thus the regularity of the quantum $\tau$-functions is proved
both in the Kac-Moody case and in the $q$-difference case. 
This is the main result of this paper.


\subsection{Conventions}

We adopt the following conventions.

The term ``quantization'' stands for ``canonical quantization'',
not for ``$q$-difference deformation''.
For example, ``the quantum $q$-Hirota-Miwa equation''
does not mean ``the $q$-difference deformation of the Hirota-Miwa equation''
but ``the $q$-difference deformation of the canonically quantized Hirota-Miwa equation''. 
(See \secref{sec:QHME} for details.)

We shall deal with both canonical quantizations and their $q$-difference analogues. 
For example, for a symmetrizable Kac-Moody Lie algebra $\g$, 
the universal enveloping algebra $U(\g)$ can be regarded as 
the canonical quantization of the symmetric algebra $S(\g)$,
and the $q$-difference deformation of $U(\g)$ is denoted by $U_q(\g)$.
We shall construct the quantum $\tau$-functions both for $U(\g)$ and for $U_q(\g)$.

A field is always commutative.
A skew field stands for a possibly non-commutative field.
An associative algebra over a field shall be always with the unit $1$.
Denote the set of all non-negative integers by $\Z_{\geqq0}$.


\paragraph{Acknowledgements.}
\ACKNOWLEDGEMENTS


\section{Quantization of birational Weyl group actions}
\label{sec:QBWGA}

In this section, we extend 
the quantized birational Weyl group action
on the dependent variables $f_i$ and the parameter variables $\av_i$
(Theorem 4.3 of \cite{Kuroki2008}) to the $\tau$-variables $\tau_i$.


\subsection{Symmetrizable GCM and Weyl group}
\label{sec:GCM}

Throughout this paper, 
a matrix $[a_{ij}]_{i,j\in I}$ stands for 
a symmetrizable generalized Cartan matrix 
(GCM for short) symmetrized by positive integers $\{d_i\}_{i\in I}$.
In other words, we assume that $[a_{ij}]_{i,j\in I}$ is an integer matrix with
\begin{equation*}
 a_{ii}=2; \quad
 a_{ij}\leqq 0\ \text{if $i\ne j$}; \quad
 a_{ij}=0\ \text{if and only if}\ a_{ji}=0; \quad
 d_ia_{ij} = d_ja_{ji}.
\end{equation*}
Let $\Qv$ be a free $\Z$-module and set $P=\Hom(\Qv,\Z)$.
Denote by $\bra\ ,\ \ket:\Qv\times P\to\Z$ the canonical pairing.
Assume that $\av_i\in \Qv$ and $\alpha_i\in P$ ($i\in I$) 
satisfy the following conditions:
(1) $\bra\av_i,\alpha_j\ket=a_{ij}$ ($i,j\in I$);
(2) $\{\av_i\}_{i\in I}$ is linearly independent over $\Z$;
(3) $\alpha_i\ne\alpha_j$ if $i\ne j$.
We call $\Qv$ the coroot lattice, $P$ the weight lattice, 
$\av_i$'s the simple coroots, and $\alpha_i$'s the simple roots.
We set $P_+=\{\,\lambda\in P\mid\bra\av_i,\lambda\ket\geqq 0\ (i\in I)\,\}$,
$Q=\bigoplus_{i\in I}\Z\alpha_i$, and
$Q_+=\bigoplus_{i\in I}\Z_{\geqq 0}\alpha_i$.
We call an element of $P_+$ a dominant integral weight
and $Q$ the root lattice.
Assume that weights $\Lambda_j\in P_+$ ($j\in I$) satisfy
$\bra\av_i,\Lambda_j\ket=\delta_{ij}$ ($i,j\in I$).
We call $\Lambda_j$'s the fundamental weights.
The Weyl vector $\rho$ is defined by $\rho=\sum_{i\in I}\Lambda_i$.

Let $W$ be the Weyl group of the GCM $[a_{ij}]_{i,j\in I}$,
namely $W$ is defined to be the group generated by $\{s_i\}_{i\in I}$
with following fundamental relations:
$s_is_j=s_js_i$         if $(a_{ij},a_{ji})=(0,0)$;
$s_is_js_i=s_js_is_j$   if $(a_{ij},a_{ji})=(-1,-1)$;
$(s_is_j)^2=(s_js_i)^2$ if $(a_{ij},a_{ji})=(-1,-2)$;
$(s_is_j)^3=(s_js_i)^3$ if $(a_{ij},a_{ji})=(-1,-3)$;
$s_i^2=1$.
The Weyl group $W$ acts on $\Qv$ and $P$ by
\begin{equation*}
 s_i(\beta)=\beta-\bra\beta,\alpha_i\ket\av_i, \quad
 s_i(\lambda)=\lambda-\bra\av_i,\lambda\ket\alpha_i.
\end{equation*}
In particular, we have
$s_i(\av_j)=\av_j-a_{ji}\av_i$ and
$s_i(\Lambda_j)=\Lambda_j-\delta_{ij}\alpha_i$.
The Weyl group action preserves 
the canonical pairing $\bra\,,\,\ket$ between $\Qv$ and $P$:
\begin{equation*}
 \bra w(\beta),w(\lambda)\ket 
 = \bra\beta,\lambda\ket
 \quad
 \text{for $\beta\in\Qv$, $\lambda\in P$, $w\in W$}.
\end{equation*}


\subsection{Quantum algebras of dependent variables}
\label{sec:A}

Let $\h$ be the vector space over $\C$ isomorphic to $\Qv_\C=\Qv\otimes\C$.
Denote by $h_i$ (resp.\ $h_\beta$) 
the element of $\h$ corresponding to $\av_i\otimes1\in\Qv_\C$
(resp.\ $\beta\in\Qv_\C$).
Note that we shall not identify $h_\beta$ with $\beta\in\Qv_\C$.

Let $\g$ be the Kac-Moody Lie algebra of type $[a_{ij}]_{i,j\in I}$ 
over $\C$, namely $\g$ is defined to be the Lie algebra over $\C$ 
generated by $\{\,e_i,f_i,h\mid i\in I, h\in\h \,\}$ with fundamental relations:
\begin{align*}
 &
 [h_\beta,h_\gamma] = 0, \quad h_\beta+h_\gamma=h_{\beta+\gamma} 
 \quad (\beta,\gamma\in\Qv_\C), 
 \\ &
 [h_\beta, e_j] =  \bra\beta,\alpha_j\ket e_j, \quad
 [h_\beta, f_j] = -\bra\beta,\alpha_j\ket f_j \quad (\beta\in\Qv_\C), \quad
 [e_i,f_j] = \delta_{ij}h_i,
 \\ &
 \sum_{k=0}^{1-a_{ij}} 
 (-1)^k e_i^{(1-a_{ij}-k)}e_je_i^{(k)} = 0,
 \quad
 \sum_{k=0}^{1-a_{ij}} 
 (-1)^k f_i^{(1-a_{ij}-k)}f_jf_i^{(k)} = 0
 \quad\text{if $i\ne j$},
\end{align*}
where $e_i^{(k)}=e_i/k!$ and $f_i^{(k)}=f_i/k!$.
The last two relations are called the Serre relations.
Let $\n_-$ (resp.\ $\n_+$) be 
the Lie subalgebra of $\g$ generated by $\{f_i\}_{i\in I}$
(resp.\ by $\{e_i\}_{i\in I}$).
Denote by $U(\mathfrak{a})$ the universal enveloping algebra of a Lie 
algebra $\mathfrak{a}$.

Note that $\Qv$ is not regarded as a subset of $\h$.
We shall assume that elements of $\Qv$ commute with $f_i$'s.

In order to deal with $q$-difference analogues, 
we introduce the $q$-numbers, $q$-factorials, and 
the $q$-binomial coefficients by
\begin{align*}
 &
 [a]_q = \frac{q^a-q^{-a}}{q-q^{-1}}, \qquad
 [k]_q! = [1]_q[2]_q\cdots[k]_q, \quad
 \\ &
 \qbinom{a}{k}_q =
 \frac{[a]_q[a-1]_q[a-2]_q\cdots[a-k+1]_q}{[k]_q!}
 \quad  (k\in\Z_{\geqq0}).
\end{align*}
Put $q_i = q^{d_i}$ for $i\in I$.

Let $U_q(\g)$ be the $q$-difference analogue of $U(\g)$, 
namely $U_q(\g)$ is defined to be the associative algebra over $\C(q)$ 
generated by $\{\,e_i,f_i,q^\beta\mid i\in I, \beta\in\Qv \,\}$ 
with fundamental relations:
\begin{align*}
 &
 q^\beta q^\gamma = q^{\beta+\gamma}, \quad q^0=1
 \quad (\beta,\gamma,0\in\Qv),
 \\ &
 q^\beta e_j q^{-\beta} = q^{ \bra\beta,\alpha_j\ket} e_j, \quad
 q^\beta f_j q^{-\beta} = q^{-\bra\beta,\alpha_j\ket} f_j
 \quad (\beta\in\Qv), \quad
 [e_i,f_j] = \delta_{ij}[h_i]_{q_i},
 \\ &
 \sum_{k=0}^{1-a_{ij}} 
 (-1)^k e_i^{(1-a_{ij}-k)}e_je_i^{(k)} = 0,
 \quad
 \sum_{k=0}^{1-a_{ij}} 
 (-1)^k f_i^{(1-a_{ij}-k)}f_jf_i^{(k)} = 0
 \quad\text{if $i\ne j$},
\end{align*}
where $e_i^{(k)}=e_i/[k]_{q_i}!$ and $f_i^{(k)}=f_i/[k]_{q_i}!$.
The last two relations are called the $q$-Serre relations.
Let $U_q(\n_-)$ (resp.\ $U_q(\n_+)$) be 
the subalgebra of $U_q(\g)$ generated by $\{f_i\}_{i\in I}$
(resp.\ $\{e_i\}_{i\in I}$).

The case where we deal with the Kac-Moody Lie algebra $\g$
(resp.\ the $q$-difference deformation $U_q(\g)$ of $U(\g)$)
shall be called {\em the Kac-Moody case} 
(resp.\ {\em the $q$-difference case}).

In the Kac-Moody (resp.\ $q$-difference) case, 
let $A$ be a residue class algebra of $U(\n_-)$ (resp.\ $U_q(\n_-)$)
and assume that $A$ is an integral domain, namely
an associative algebra without non-zero zero divisors.
Denote the images of $f_i$'s in $A$ by the same symbols.
We also assume that $f_i\ne 0$ in $A$ for all $i\in I$.
We call $f_i$'s {\em the dependent variables}.
We shall construct the quantum birational Weyl group action on 
certain extensions of $A$.

We denote by $Q(R)$ the skew field of fractions of an Ore domain $R$.
Any element of $Q(R)$ can be expressed as $as^{-1}$ and $t^{-1}b$
for some $a,b,s,t\in A$ with $s\ne 0$ and $t\ne 0$.
We denote by $F(x_1,\ldots,x_n)$ the the skew field of fractions 
of the polynomial ring $F[x_1,\ldots,x_n]$ over a skew field $F$.

If the GCM is of finite or affine type, 
then $A$ is always an Ore domain (Theorem 2.12 of \cite{Kuroki2008}).
When $A$ is an Ore domain, we denote $Q(A)$ by $K$.

\begin{remark}
\label{remark:Ore}
 We refer \cite{S-1971}, \cite{Jat-1986}, \cite{MR-2001}, \cite{GW-2004},
 Section 3.6 of \cite{Dixmier}, 
 and Section 2 of \cite{Kuroki2008}
 for the theory of localization of non-commutative rings.

 A Noetherian domain is always an Ore domain 
 (2.1.15 of \cite{MR-2001}, Corollary 6.7 of \cite{GW-2004}).
 The universal enveloping algebra of a finite dimensional Lie algebra
 is a Noetherian domain and hence an Ore domain.
 The $q$-difference deformation $U_q(\g)$ of $U(\g)$ 
 and its lower triangular part $U_q(\n_-)$ are always integral domains 
 (Section 7.3 of \cite{Jos-1995}).
 If the GCM is of finite type, then $U_q(\g)$ and $U_q(\n_-)$ are
 Noetherian (Section 7.4 of \cite{Jos-1995}) and hence Ore domains.

 Let $R$ be an associative algebra over a field with
 a increasing filtration $R=\bigcup_{k=0}^\infty R_k$ such that
 each $R_k$ is a finite dimensional subspace of $R$ and $1\in R_0$.
 We call $R$ {\em the tempered domain} if $R$ is an integral domain and 
 the convergence radius of $\sum_{k=0}^\infty \dim(R_k) z^k$ 
 is not less than $1$.
 A tempered domain is always an Ore domain (Lemma 2.9 of \cite{Kuroki2008},
 Lemma 1.2 of \cite{RCW}).
 This result is very useful for proving that a given algebra is an Ore domain. 
 In particular, it follows that, if the GCM is of finite or affine type, 
 then $U(\g)$, $U_q(\g)$, and their all subquotient domains 
 are Ore domains (Theorem 2.12 of \cite{Kuroki2008}).

 Consider  the polynomial ring $R=F[x_1,\ldots,x_n]$ over a skew field $F$
 and assume that $m=0,1,2,\ldots,n$ and $c_1,\ldots,c_m$ are central
 elements of $F$.
 Then, applying Theorem 2.1 of \cite{S-1971} to $R$, we obtain that 
 $S=\{\, f\in R\mid f(c_1,\ldots,c_m,x_{m+1},\ldots,x_n)\ne 0\,\}$
 is an Ore set in $R$.
 \qed
\end{remark}

\begin{example}
 Assume that $\eps_{ij}\in\{0,\pm1\}$, $\eps_{ji}=-\eps_{ij}$, and 
 $\eps_{ij}\ne 0$ if and only if $a_{ij}\ne 0$ and $i\ne j$.
 Set $c_{ij}=\eps_{ij}d_ia_{ij}$ for $i,j\in I$.
 If $A$ is the associative algebra over $\C$ generated by
 $\{f_i\}_{i\in I}$ with fundamental relations
 $[f_i,f_j] = c_{ij}$ $(i,j\in I)$,
 then $A$ is a quotient Ore domain of $U(\n_-)$.
 If $A$ is the associative algebra over $\C(q)$ generated by
 $\{f_i\}_{i\in I}$ with fundamental relations
 $f_j f_i = q^{c_{ij}}f_if_j$ $(i,j\in I)$, 
 then $A$ is a quotient Ore domain of $U_q(\n_-)$.
 In the both cases, we have the skew field $K=Q(A)$ of fractions of $A$.
 \qed
\end{example}

If $R$ is an associative algebra and $S$ is an Ore set in $R$, 
then we have the localization $R[S^{-1}]$ of $R$ with respect to $S$.
For any $c\in R[S^{-1}]$, there exist some $a,b\in R$ and $s,t\in S$
such that $c=as^{-1}=t^{-1}b$.

The multiplicative subset of $A$ generated by $\{f_i\}_{i\in I}$
is an Ore set in $A$ owing to the Serre and $q$-Serre relations.
Therefore we have the localization $\tA=A[f_i^{-1}|i\in I]$ 
of $A$ with respect to it.

We have $A\subset\tA$ in any case
and $\tA\subset K=Q(A)$ in the case where $A$ is an Ore domain.

\begin{example}
\label{example:C[x,d]A2}
 Assume that the GCM is of type $A_2$: $I=\{1,2\}$, $a_{ii}=2$, 
 $a_{12}=a_{21}=-1$, $d_i=1$. Then the Weyl algebra $A=\C[x,\d]$, 
 where $\d=d/dx$, can be regarded as a quotient Ore domain of $U(\n_-)$.
 The surjective algebra homomorphism from $U(\n_-)$ onto $A=\C[x,\d]$ 
 is given by $f_1\mapsto x$ and $f_2\mapsto\d$.
 Then the multiplicative subset of $A=\C[x,\d]$ generated by $x$ and $\d$
 is an Ore set in $A=\C[x,\d]$ and we obtain $\tA=\C[x^{\pm1},\d^{\pm1}]$.
 For any polynomial $f\in\C[x]$, we have
 \begin{equation*}
  \d^{-1}f 
  = \sum_{k=0}^\infty (-1)^k f^{(k)}\d^{-k-1}
  = f\d^{-1} - f'\d^{-2} + f''\d^{-3} - \cdots
  \quad \text{in $\tA=\C[x^{\pm1},\d^{\pm1}]$}.
 \end{equation*}
 The right-hand side reduces to a finite sum.
 \qed
\end{example}

\begin{example}
\label{example:C[x,d]}
The Weyl algebra $A=\C[x,\d]$, $\d=d/dx$, can be regarded as 
a quotient Ore domains of $U(\n_-)$ of various affine types:
\begin{align*}
& D^{(1)}_4: \quad f_2=\d, \quad f_i=x-a_i \quad (a_i\in\C, i=0,1,3,4), \\
& B^{(1)}_3: \quad f_1=\d, \quad f_2=x, \quad f_3=x-a, \quad f_0=(x-b)^2 \quad (a,b\in\C), \\
& A^{(1)}_3: \quad f_1=\d, \quad f_2=x, \quad f_3=x-a, \quad f_0=\d-b \quad (a,b\in\C), \\
& G^{(1)}_2: \quad f_1=\d, \quad f_2=x, \quad f_0=(x-a)^3 \quad (a\in\C), \\
& A^{(1)}_2: \quad f_1=\d, \quad f_2=x, \quad f_0=\d+x, \\
& D^{(2)}_5: \quad f_1=\d, \quad f_2=x^2, \quad f_0=(x-a)^2 \quad (a\in\C), \\
& C^{(1)}_2: \quad f_1=\d, \quad f_2=x^2, \quad f_0=\d-a    \quad (a\in\C), \\
& A^{(2)}_2: \quad f_1=\d, \quad f_0=x^4, \\
& A^{(1)}_1: \quad f_1=\d, \quad f_0=\d+x^2.
\end{align*}
The GCM's of type $D^{(1)}_4$, $A^{(1)}_3$, and $A^{(1)}_2$ are simply-laced, 
those of $C^{(1)}_2$ and $A^{(1)}_1$ are symmetric, and
those of $B^{(1)}_3$, $G^{(1)}_2$, $D^{(2)}_5$, and $A^{(2)}_2$ are not symmetric 
but foldings of $D^{(1)}_4$.

For example, in the case of $G^{(1)}_2$, 
from $f_1=\d$, $f_2=x$, and $f_0=(x-a)^3$,
it follows that $[f_1,[f_1,f_2]]=0$, $[f_2,[f_2,f_1]]=0$, 
$[f_1,[f_1,[f_1,[f_1,f_0]]]]=0$, $[f_0,[f_0,f_1]]=0$, and $[f_2,f_0]=0$.
These are the Serre relations of type $G^{(1)}_2$.

Recall that the Painlev\'e equations $\PVI$, $\PV$, $\PIV$, $\PIII$, and $\PII$
have the birational Weyl group symmetries of type 
$D^{(1)}_4$, $A^{(1)}_3$, $A^{(1)}_2$, $C^{(1)}_2$, and $A^{(1)}_1$, respectively.
By Proposition 5.13 of \cite{Yamakawa}, 
we have the following isomorphisms of the affine Weyl groups:
\begin{alignat*}{2}
 & W(B^{(1)}_3) \cong W(A^{(1)}_3)\rtimes\Z/2\Z, \quad &
 & W(G^{(1)}_2) \cong W(A^{(1)}_2)\rtimes\Z/2\Z, \\
 & W(D^{(2)}_5) \cong W(C^{(2)}_2)\rtimes\Z/2\Z, \quad &
 & W(A^{(2)}_2) \cong W(A^{(1)}_1)\rtimes\Z/2\Z.
\end{alignat*}
In fact, the above list is related to the canonical quantization of the Painlev\'e equations.
For the construction of the quantum Painlev\'e equations, 
see \cite{JNS}, \cite{NGR}, \cite{Nagoya2011}, and \cite{Nagoya2012}.
The Weyl group symmetry of the quantum Painlev\'e equations are naturally interpreted
from the perspective of the quantum birational Weyl group actions 
defined in this paper or in \cite{Kuroki2008}.
\qed
\end{example}


\subsection{Quantum algebras with parameter variables}
\label{sec:A^pa}

In the Kac-Moody case, we consider
the polynomial rings 
$A^\pa=A[\beta|\beta\in\Qv]$ and $\tA^\pa=\tA[\beta|\beta\in\Qv]$
generated by any free basis of the coroot lattice $\Qv$ over $A$ and $\tA$ respectively,
where $\beta\in\Qv$'s are central in them:
\begin{equation*}
 \beta f_j = f_j \beta, \quad
 \beta\gamma = \gamma\beta \quad
 (\beta,\gamma\in\Qv).
\end{equation*}
We naturally regard the coroot lattice $\Qv$ 
as a subset of $A^\pa$ and $\tA^\pa$.
We call $\beta\in\Qv$ {\em the parameter variables}.

In the $q$-difference case, we consider
the Laurent polynomial rings 
$A^\pa=A[q^\beta|\beta\in\Qv]$ and
$\tA^\pa=\tA[q^\beta|\beta\in\Qv]$
spanned by $\{q^\beta\}_{\beta\in\Qv}$ 
over $A$ and $\tA$ respectively,
where $q^\beta$'s are central in them:
\begin{equation*}
 q^\beta f_j = f_j q^\beta, \quad
 q^\beta q^\gamma = q^{\beta+\gamma} \quad
 (\gamma,\beta\in\Qv).
\end{equation*}
We call $q^\beta$'s {\em the parameter variables}.

In the both cases, if $A$ is an Ore domain, 
then $A^\pa$ is also an Ore domain.
We denote $Q(A^\pa)$ by $K^\pa$ shortly, 
namely, $K^\pa=K(\beta|\beta\in\Qv)$ in the Kac-Moody case
and $K^\pa=K(q^\beta|\beta\in\Qv)$ in the $q$-difference case.
We have $K^\pa=Q(\tA^\pa)$.

We call $A^\pa$, $\tA^\pa$, and $K^\pa$ 
{\em the quantum algebras with parameter variables}.
Also in the following, 
the symbol ``$(\ )^\pa$'' stands for {\em ``with parameter variables''}.

In the Kac-Moody (resp.\ $q$-difference) case,
for each $\lambda\in P$, we define the algebra homomorphism 
$\phi_\lambda:\tA^\pa\to\tA$ by
\begin{equation*}
 \phi_\lambda(a)=a, \quad
 \phi_\lambda(\beta) = \bra\beta,\lambda\ket \quad
 (\text{resp.}\ \phi_\lambda(q^\beta) = q^{\bra\beta,\lambda\ket}) \quad
 (a\in\tA, \beta\in\Qv).
\end{equation*}
This homomorphism $\phi_\lambda$ 
substitutes $\bra\beta,\lambda\ket$ into $\beta\in\Qv$.
Thus we obtain the injective algebra homomorphism 
$\phi:\tA^\pa\to\tA^P$ by $\phi(a)=(\phi_\lambda(a))_{\lambda\in P}$
($a\in\tA^\pa$) and identify $\tA^\pa$ with its image in $\tA^P$.
Since $\phi(A^\pa)\subset A^P$, we obtain the injective algebra
homomorphism $\phi:A^\pa\to A^P$
and identify $A^\pa$ with its image in $A^P$.

In this paragraph, we assume that $A$ is an Ore domain.
For each $\lambda\in P$, 
define the multiplicative subset $S_\lambda$ of $A^\pa$ 
by $S_\lambda=\{\,a\in A^\pa\mid\phi_\lambda(a)\ne0\,\}$.
Then $S_\lambda$ is an Ore set in $A^\pa$.
(See the last paragraph of \remarkref{remark:Ore}.)
Thus we obtain the subalgebra $A^\pa[S_\lambda^{-1}]$ of $K^\pa$
and $\phi_\lambda:A^\pa\to A$ is uniquely extended to 
the algebra homomorphism $\phi_\lambda:\tA^\pa[S_\lambda^{-1}]\to K$.
Any element of $\tA^\pa[S_\lambda^{-1}]$ can be expressed as 
$as^{-1}$ by some $a,s\in A^\pa$ with $\phi_\lambda(s)\ne 0$
and $\phi_\lambda(as^{-1})=\phi_\lambda(a)\phi_\lambda(s)^{-1}$.
Let $A^\pa_{(P)}$ be the intersection of $A^\pa[S_\lambda^{-1}]$
for all $\lambda\in P$. 
Then $\phi_\lambda:A^\pa\to A$ is uniquely extended to
the algebra homomorphism $\phi_\lambda: A^\pa_{(P)}\to K$.
Thus we obtain the injective algebra homomorphism 
$\phi:A^\pa_{(P)}\to K^P$ by $\phi(a)=(\phi_\lambda(a))_{\lambda\in P}$
($a\in A^\pa_{(P)}$)
and identify $A^\pa_{(P)}$ with its image in $K^P$.

We have $A^\pa\subset\tA^\pa\subset\tA^P$ in any case
and $\tA^\pa\subset A^\pa_{(P)}\subset K^P$
in the case where $A$ is an Ore domain.


\subsection{Quantum difference operator algebras and $\tau$-variables}
\label{sec:D(A^pa)}

In this subsection, we shall introduce difference operators acting on 
the parameter variables and call them $\tau$-variables.

In the Kac-Moody case, 
for each $\mu\in P$, let $\tau^\mu$ be the difference operator 
acting on quantum algebras with parameter variables given by
\begin{equation*}
 \tau^\mu(f_i) = f_i, \quad
 \tau^\mu(\beta) = \beta+\bra\beta,\mu\ket \quad
 (i\in I, \beta\in\Qv).
\end{equation*}
Thus we obtain the difference operator algebras
$D(A^\pa)=A^\pa[\tau^\mu|\mu\in P]$, 
$D(\tA^\pa)=\tA^\pa[\tau^\mu|\mu\in P]$, 
$D(K^\pa)=K^\pa[\tau^\mu|\mu\in P]$.
In these algebras, we have
\begin{equation*}
 \tau^\lambda\tau^\mu=\tau^{\lambda+\mu}, \quad
 \tau^\mu f_i = f_i\tau^\mu, \quad
 \tau^\mu \beta = (\beta+\bra\beta,\mu\ket)\tau^\mu \quad
 (\lambda,\mu\in P, \beta\in\Qv).
\end{equation*}

In the $q$-difference case, 
for each $\mu\in P$, let $\tau^\mu$ be the $q$-difference operator 
acting on quantum algebras with parameter variables given by
\begin{equation*}
 \tau^\mu(f_i) = f_i, \quad
 \tau^\mu(q^\beta) = q^{\beta+\bra\beta,\mu\ket} \quad
 (i\in I, \beta\in\Qv).
\end{equation*}
Thus we obtain the $q$-difference operator algebras 
$D(A^\pa)$, $D(\tA^\pa)$, $D(K^\pa)$, 
similarly as in the Kac-Moody case.
In these algebras we have
\begin{equation*}
 \tau^\lambda\tau^\mu=\tau^{\lambda+\mu}, \quad
 \tau^\mu f_i = f_i\tau^\mu, \quad
 \tau^\mu q^\beta = q^{\beta+\bra\beta,\mu\ket}\tau^\mu \quad
 (\lambda,\mu\in P, \beta\in\Qv).
\end{equation*}

Note that $D(K^\pa)$ is defined only in the case where $A$ is an Ore domain.

For any algebra $R$ and any $\mu\in P$, 
the algebra automorphism $\tau^\mu$ of $R^P$ is 
defined by the translation \(
 \tau^\mu((a_\lambda)_{\lambda\in P}) =
 (a_{\lambda+\mu})_{\lambda\in P}
\) ($(a_\lambda)_{\lambda\in P}\in R^P$).
Thus we obtain the extended algebra $D(R^P)=R^P[\tau^\mu|\mu\in P]$.

The injective algebra homomorphism $\phi:\tA^\pa\to\tA^P$
commutes with $\tau^\mu$ for any $\mu\in P$.
Therefore $\phi$ is naturally extended to
the injective algebra homomorphism $\phi:D(\tA^\pa)\to D(\tA^P)$.
We identify $D(\tA^\pa)$ with its image in $D(\tA^P)$.

Similarly, when $A$ is an Ore domain, $\phi:A^\pa_{(P)}\to K$ is
naturally extended to
the injective algebra homomorphism $\phi:D(A^\pa_{(P)})\to D(K^P)$.
We identify $D(\tA^\pa_{(P)})$ with its image in $D(K^P)$.

We have $D(A^\pa)\subset D(\tA^\pa)\subset D(\tA^P)$ in any case
and $D(\tA^\pa)\subset D(A^\pa_{(P)})\subset D(K^P)$ 
in the case where $A$ is an Ore domain.
These algebras are called {\em the quantum difference operator algebras}.

We call $\tau^\mu$'s {\em the quantum Laurent $\tau$-monomials}.
We define {\em the quantum $\tau$-variables} $\tau_i$ ($i\in I$) 
by $\tau_i=\tau^{\Lambda_i}$.


\subsection{Quantum algebras with fractional powers}
\label{sec:f^beta}

For each $\beta\in\Qv$, we define 
{\em the fractional power} $f_i^\beta$ by \(
 f_i^\beta=(f_i^{\bra\beta,\lambda\ket})_{\lambda\in P} \in\tA^P
\). Let $\A$ be the subalgebra of $\tA^P$ generated 
by $\tA^\pa=\phi(\tA^\pa)$ and the fractional powers $f_i^\beta$ ($\beta\in\Qv$),
and $D(\A)$ the subalgebra of $D(\tA^P)$ generated
by $D(\tA^\pa)=\phi(D(\tA^\pa))$ and the fractional powers $f_i^\beta$ ($\beta\in\Qv$):
\begin{equation*}
 \A    = \tA^\pa[f_i^\beta|\beta\in\Qv], \quad
 D(\A) = \A[\tau^\mu|\mu\in P].
\end{equation*}
These algebras are called {\em the quantum algebra with fractional powers}
and {\em the quantum difference operator algebra with fractional powers}, 
respectively. In $D(\A)$, we have
\begin{equation*}
 \tau^\mu f_i^\beta = f_i^{\beta+\bra\beta,\mu\ket}\tau^\mu
 \quad (\mu\in P, \beta\in\Qv, i\in I).
\end{equation*}
For $\lambda\in P$, let $\phi_\lambda:\A\to\tA$ be the restriction on $\A$ of 
the canonical projection from $\tA^P$ onto its $\lambda$-factor.
Then we have $\phi_\lambda(f_i^\beta)=f_i^{\bra\beta,\lambda\ket}$.

Similarly, when $A$ is an Ore domain, we can construct the algebras
\begin{equation*}
 \A_{(P)}    = A^\pa_{(P)}[f_i^\beta|\beta\in\Qv], \quad
 D(\A_{(P)}) = \A_{(P)}[\tau^\mu|\mu\in P]
\end{equation*}
as subalgebras of $K^P$ and $D(K^P)$, respectively.
For $\lambda\in P$, let $\phi_\lambda:\A_{(P)}\to K$ be 
the restriction on $\A_{(P)}$ of 
the canonical projection from $K^P$ onto its $\lambda$-factor.


\subsection{Tilde action of the Weyl group}
\label{sec:tilde}

For any algebra $R$ and any $w\in W$, 
the algebra automorphism $\tw$ of $D(R^P)$ is given by
\begin{equation*}
 \tw((a_\lambda)_{\lambda\in P}) = (a_{w^{-1}(\lambda)})_{\lambda\in P},
 \quad
 \tw(\tau^\mu) = \tau^{w(\mu)}
 \quad 
 ((a_\lambda)_{\lambda\in P}\in R^P, \mu\in P).
\end{equation*}
This is called {\em the tilde action} of the Weyl group.

In the case of $R=\tA$, 
the tilde action of $w\in W$ on $D(\tA^P)$ preserves $D(\A)$
and its action on $D(\A)$ is characterized by
\begin{align*}
 &
 \tw(f_i^{\pm1})=f_i^{\pm1} \quad (i\in I),
 \\ &
 \tw(\beta)=w(\beta) \quad (\beta\in\Qv)
 \quad\text{in the Kac-Moody case},
 \\ &
 \tw(q^\beta)=q^{w(\beta)} \quad (\beta\in\Qv)
 \quad\text{in the $q$-difference case},
 \\&
 \tw(f_i^\beta)=f_i^{w(\beta)} \quad (i\in I, \beta\in\Qv),
 \\&
 \tw(\tau^\mu)=\tau^{w(\mu)} \quad (\mu\in P).
\end{align*}
Similarly, when $A$ is an Ore domain, the tilde action on $D(K^P)$
preserves $D(\A_{(P)})$.


\subsection{Quantum birational Weyl group action}
\label{sec:birat}

We are ready to construct the quantization of 
the birational Weyl group action arising from a nilpotent
Poisson algebra proposed by Noumi and Yamada \cite{NY0012028}.

\begin{lemma}[Verma identities]
\label{lemma:Verma}
 In $D(\A)$, for any $\beta,\gamma\in\Qv$, we have
 \begin{itemize}

  \item If $(a_{ij},a_{ji})=(0,0)$, then \(
   f_i^{\beta} f_j^{\gamma} = f_j^{\gamma} f_i^{\beta}
  \).

  \item If $(a_{ij},a_{ji})=(-1,-1)$, then \(
   f_i^{\beta} f_j^{\beta+\gamma} f_i^{\gamma} =
   f_j^{\gamma} f_i^{\beta+\gamma} f_j^{\beta}
  \).

  \item If $(a_{ij},a_{ji})=(-1,-2)$, then \(
   f_i^{\beta} f_j^{2\beta+\gamma} f_i^{\beta+\gamma} f_j^{\gamma} =
   f_j^{\gamma} f_i^{\beta+\gamma} f_j^{2\beta+\gamma} f_i^{\beta}
  \).

  \item If $(a_{ij},a_{ji})=(-1,-3)$, then \\
  \hphantom{\qquad} 
  \(
   f_i^{\beta} f_j^{3\beta+\gamma} f_i^{2\beta+\gamma} 
   f_j^{3\beta+2\gamma} f_i^{\beta+\gamma} f_j^{\gamma} =
   f_j^{\gamma} f_i^{\beta+\gamma} f_j^{3\beta+2\gamma} 
   f_i^{2\beta+\gamma} f_j^{3\beta+\gamma} f_i^{\beta}
  \).
 \end{itemize}
\end{lemma}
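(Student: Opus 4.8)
The plan is to verify each identity componentwise inside $\tA^P$ and then to recognize it, after the name suggests, as the uniqueness of singular vectors in Verma modules over a rank-two Kac-Moody algebra. First I would observe that none of the four identities involves the operators $\tau^\mu$, so each is really an identity in $\A\subset\tA^P$ and may be checked factor by factor: applying the projection $\phi_\lambda:\A\to\tA$ onto the $\lambda$-component replaces every fractional power $f_i^\beta$ by the genuine (possibly negative) integer power $f_i^{\bra\beta,\lambda\ket}\in\tA$. Writing $m=\bra\beta,\lambda\ket$ and $n=\bra\gamma,\lambda\ket$, the lemma becomes equivalent to the family of integer-exponent identities in $\tA$, for instance
\[
 f_i^{m}f_j^{m+n}f_i^{n}=f_j^{n}f_i^{m+n}f_j^{m}\qquad(m,n\in\Z)
\]
in the case $(a_{ij},a_{ji})=(-1,-1)$, and the analogous four- and six-factor identities in the remaining cases; the pair $(m,n)$ ranges over all of $\Z^2$ as $\beta,\gamma,\lambda$ vary. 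Since $\tA$ is a quotient of $U(\n_-)$ (resp.\ $U_q(\n_-)$) localized at the $f_i$, and the subalgebra generated by $f_i,f_j$ is a quotient of $U(\n_-^{(ij)})$ (resp.\ its $q$-analogue) for the rank-two GCM $[a_{kl}]_{k,l\in\{i,j\}}$, which is of finite type ($A_1\times A_1$, $A_2$, $B_2$, or $G_2$), it suffices to prove these integer identities inside $U(\n_-^{(ij)})[f_i^{-1},f_j^{-1}]$. This reduction to a finite-type rank-two subalgebra is the crucial simplification.

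Next I would establish the identities for nonnegative exponents using Verma modules, which explains the terminology. The Verma module $M(\lambda)$ over the finite-type $\g^{(ij)}$ is free of rank one over $U(\n_-^{(ij)})$, so $u\,v_\lambda=u'\,v_\lambda$ forces $u=u'$. For $\lambda$ dominant integral, $f_i^{\bra\av_i,\lambda+\rho\ket}v_\lambda$ is a singular vector, and iterating this construction along a reduced word for the longest element $w_{ij}$ of $\langle s_i,s_j\rangle$ produces a singular vector of weight $w_{ij}\circ\lambda$ whose successive exponents are exactly the coefficients in the lemma with $\beta,\gamma$ replaced by $\bra\av_i,\lambda+\rho\ket,\bra\av_j,\lambda+\rho\ket$. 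The two reduced words $s_is_js_i\cdots$ and $s_js_is_j\cdots$ give the two sides; since $\g^{(ij)}$ is of finite type, the singular-vector space of weight $w_{ij}\circ\lambda$ is one-dimensional (Verma's theorem, BGG multiplicity one), so the two singular vectors are proportional, and a comparison of leading PBW terms shows the scalar is $1$. This yields the identity whenever $m,n\geqq1$; the cases $m=0$ or $n=0$ are trivial, so all of $\Z_{\geqq0}^2$ is covered. In the $q$-difference case the same argument runs verbatim with quantum Verma modules and singular vectors $f_i^{\bra\av_i,\lambda+\rho\ket}v_\lambda$, using quantum BGG multiplicity one (equivalently, via the Etingof-Kazhdan equivalence already invoked in the paper).

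Finally I would pass from $\Z_{\geqq0}^2$ to $\Z^2$ by a density argument. Because $\n_-^{(ij)}$ is finite-dimensional, expanding each side in a fixed PBW basis of $U(\n_-^{(ij)})[f_i^{-1},f_j^{-1}]$ produces finitely many monomials whose coefficients are polynomials in $m,n$ (Kac-Moody case) or rational functions of $q^{m},q^{n}$ ($q$-difference case), since the reordering relations introduce only such dependence; in the $A_2$ case this is transparent because $\n_-^{(A_2)}$ is Heisenberg and the reordering is the explicit Weyl-algebra formula of \exampleref{example:C[x,d]A2}. A polynomial, or a rational function, vanishing on the infinite grid $\Z_{\geqq0}^2$ vanishes identically, so the integer identities hold for all $(m,n)\in\Z^2$, and hence the original identities hold in $\A$ and in $D(\A)$.

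I expect the main obstacle to be twofold. First, the exponent bookkeeping: one must check that the successive singular-vector exponents $\bra s_{i_1}\cdots\av,\lambda+\rho\ket$ along the two reduced words reproduce precisely the weights $\beta,\,3\beta+\gamma,\,2\beta+\gamma,\dots$ listed in the $B_2$ and $G_2$ cases, i.e.\ that they are the pairings of $\lambda+\rho$ with the positive coroots of the rank-two subsystem. Second, the multiplicity-one input: one must guarantee that the relevant singular-vector space is genuinely one-dimensional, which is exactly why reducing to the finite-type subalgebra $\g^{(ij)}$ matters, since in the full symmetrizable Kac-Moody $\g$ there could a priori be singular vectors of the same weight arising from other roots.
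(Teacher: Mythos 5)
Your first two steps are sound and, on the non-negative quadrant, you take a route the paper itself only gestures at: the paper's proof of \lemmaref{lemma:Verma} makes the same componentwise reduction to integer exponents and then simply cites Proposition 39.3.7 of Lusztig for $m,n\geqq0$, while the remark that follows it records exactly your alternative justification (uniqueness, up to scalar multiples, of homomorphisms between Verma modules; 4.4.16 of Joseph). So through the rank-two reduction and the singular-vector/multiplicity-one argument, your proposal is a legitimate, somewhat more self-contained variant of the paper's argument.

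The genuine gap is the final ``density'' step. Your claim that each side, expanded in a fixed PBW basis of $U(\n_-^{(ij)})[f_i^{-1},f_j^{-1}]$, is a finite sum of monomials with coefficients polynomial in $(m,n)$ fails on two counts. First, the monomials that occur are not fixed: in type $A_2$, with $c=[f_i,f_j]$ central, one has
\begin{equation*}
 f_i^mf_j^{m+n}f_i^n
 =\sum_{k\geqq0}(-1)^k\,k!\binom{m+n}{k}\binom{n}{k}\,f_i^{m+n-k}f_j^{m+n-k}c^k,
\end{equation*}
so the basis monomials themselves carry the exponents $m,n$; there is no single finite family of basis vectors whose coefficient functions are defined on all of $\Z^2$ and can be compared on $\Z_{\geqq0}^2$. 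Second, and worse, such finite normal-form expansions exist only under sign restrictions: reordering $f_j^{p}f_i^{q}$ is a finite sum only when $p\geqq0$ or $q\geqq0$ (if both are negative, the series $\sum_k\binom{p}{k}\binom{q}{k}\cdots$ never terminates and is not an identity in the Ore localization). Hence ``a polynomial vanishing on $\Z_{\geqq0}^2$ vanishes identically'' is not applicable to any object defined on all of $\Z^2$, and the extension to arbitrary integer exponents does not follow. The missing idea is the paper's elementary substitution trick, spelled out in \exampleref{example:x-d-Verma}: multiplying both sides of a Verma identity on the left and on the right by suitable inverse powers converts the identity for $(m,n)$ into the identity of the same shape for new parameters --- for instance $(m,n)\leftrightarrow(m+n,-n)$ in the $(-1,-1)$ case, and analogously $(\beta,\gamma)\leftrightarrow(-\beta,2\beta+\gamma)$ in the $(-1,-2)$ case --- and iterating these substitutions carries every $(m,n)\in\Z^2$ into $\Z_{\geqq0}^2$. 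With that reduction in place of the density argument your proof closes up; without it, the passage from non-negative to general integer exponents is unjustified.
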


\begin{proof}
 It is sufficient to show 
 that the identities obtained by substituting $(m,n)$ to $(\beta,\gamma)$ 
 in the above identities hold for all integers $m,n$.
 But it reduces to the cases where $m,n$ are non-negative.
 The proof of the non-negative cases is found 
 in Proposition 39.3.7 of \cite{Lusztig}.
 \qed
\end{proof}

\begin{remark}
 The Verma identities (Proposition 39.3.7 of \cite{Lusztig}) 
 can be regarded as a corollary of the uniqueness (up to scalar multiples) 
 of homomorphisms between Verma modules.
 See 4.4.16 of \cite{Jos-1995}.
 \qed
\end{remark}

\begin{example}
\label{example:x-d-Verma}
 Under the setting of \exampleref{example:C[x,d]A2}
 and $\Qv=\Z\av_1\oplus\Z\av_2$, 
 the following formula holds 
 in $\A^\pa=\C[x,\d,\av_i,x^{\pm\av_i},\d^{\pm\av_i}|i=1,2]$:
 \begin{equation}
  x^a\d^{a+b}x^b = \d^b x^{a+b} \d^a,
  \label{eq:x-d-Verma}
 \end{equation}
 where $a=\beta$, $b=\gamma$, and $\beta,\gamma\in\Qv$.
 Although this is a special case of \lemmaref{lemma:Verma}, 
 we shall show the direct proof to help 
 understanding the proof of \lemmaref{lemma:Verma}.
 It is sufficient to prove that the formula 
 \eqref{eq:x-d-Verma} holds for any $a,b\in\Z$.
 Using the Leibnitz formula
 \begin{equation*}
  \d^n f = \sum_{k=0}^n\binom{n}{k}f^{(k)}\d^{n-k}
  \quad \text{in $\C[x,\d]$}
  \quad (f\in\C[x], n\in\Z_{\geqq0}),
 \end{equation*}
 we can show that for any $a,b\in\Z_{\geqq0}$, the both-hand sides
 of \eqref{eq:x-d-Verma} are equal to
 \[
  \sum_{k=0}^b k!\binom{a+b}{k}\binom{b}{k}x^{a+b-k}\d^{a+b-k}.
 \]
 The formula \eqref{eq:x-d-Verma} for $a,b\in\Z_{\geqq0}$ has been proved. 
 Replacing $(a,b)$ with $(a+b,-b)$,
 we find that the formula \eqref{eq:x-d-Verma} for $a,b\in\Z_{\geqq0}$ is
 equivalent to the one for $a\in\Z_{\geqq0}$ and $b\in\Z_{\leqq0}$
 with $a+b\geqq0$.
 Replacing $(a,b)$ with $(-a-b,a)$,
 we find that the formula \eqref{eq:x-d-Verma} for $a,b\in\Z_{\geqq0}$ is
 equivalent to the one for $a\in\Z_{\geqq0}$ and $b\in\Z_{\leqq0}$
 with $a+b\leqq0$.
 We can similarly obtain the other cases and prove 
 the formula \eqref{eq:x-d-Verma} for any $a,b\in \Z$.

 The fractional power $\d^\beta$ of $\d=d/dx$ 
 is related to the fractional calculus and the Kats middle convolution.
 See \cite{Oshima}, \cite{Nagoya2012}, and references therein.
 \qed
\end{example}

\begin{remark}
\label{remark:geometric-crystal}
 For $c=q^\beta$ ($\beta\in\Qv$), define $\be_i^c$ by $\be_i^c(a)=f_i^\beta a f_i^{-\beta}$.
 Then \lemmaref{lemma:Verma} immediately leads to the Verma identities of $\be_i^c$ ($i\in i$):
 for $c=q^\beta$ and $d=q^\gamma$, 
 \begin{itemize}

  \item If $(a_{ij},a_{ji})=(0,0)$, then \(
   \be_i^c \be_j^d = \be_j^d \be_i^c
  \).

  \item If $(a_{ij},a_{ji})=(-1,-1)$, then \(
   \be_i^c \be_j^{cd} \be_i^d =
   \be_j^c \be_i^{cd} \be_j^c
  \).

  \item If $(a_{ij},a_{ji})=(-1,-2)$, then \(
   \be_i^c \be_j^{c^2d} \be_i^{cd} \be_j^d =
   \be_j^d \be_i^{cd} \be_j^{c^2d} \be_i^c
  \).

  \item If $(a_{ij},a_{ji})=(-1,-3)$, then 
  \(
   \be_i^c \be_j^{c^3d} \be_i^{c^2d^2} \be_j^{c^3d^2} \be_i^{cd} \be_j^d =
   \be_j^c \be_i^{cd} \be_j^{c^3d^2} \be_i^{c^2d} \be_j^{c^2d} \be_i^c
  \).
 \end{itemize}
 These identities mean that $\{\be_i^c\}_{i\in I}$ can be regarded as
 a quantum version of geometric crystal 
 defined by Berenstein and Kazhdan \cite{BK2000}.
 \qed
\end{remark}

For any associative algebra $R$ and an invertible element $a\in R^\times$,
{\em the inner algebra automorphism} $\Ad(a)$ of $R$ is defined by
\begin{equation*}
 \Ad(a)(x) = a x a^{-1} \quad \text{for $x\in R$}.
\end{equation*}
Then the multiplicative group $R^\times$ acts on $R$ via $\Ad$.

\begin{definition}
 For each $i\in I$, the algebra automorphisms $\bs_i$ of $D(\A)$ 
 (and of $D(\A_{(P)})$ when $A$ is an Ore domain) 
 by $\bs_i = \Ad(f_i^{\av_i})\ts_i$. 
 \qed
\end{definition}

\begin{lemma}
\label{lemma:bs}
 The set $\{\bs_i\}_{i\in I}$ of algebra automorphisms of $D(\A)$
 (and of $D(\A_{(P)})$ when $A$ is an Ore domain) satisfies 
 the fundamental relations of the Weyl group:
 $\bs_i\bs_j=\bs_j\bs_i$         if $(a_{ij},a_{ji})=(0,0)$;
 $\bs_i\bs_j\bs_i=\bs_j\bs_i\bs_j$   if $(a_{ij},a_{ji})=(-1,-1)$;
 $(\bs_i\bs_j)^2=(\bs_j\bs_i)^2$ if $(a_{ij},a_{ji})=(-1,-2)$;
 $(\bs_i\bs_j)^3=(\bs_j\bs_i)^3$ if $(a_{ij},a_{ji})=(-1,-3)$;
 $\bs_i^2=1$.
\end{lemma}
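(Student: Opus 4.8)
The plan is to factor each $\bs_i=\Ad(f_i^{\av_i})\ts_i$ into its inner part $\Ad(f_i^{\av_i})$ and its tilde part $\ts_i$, and to push all the inner parts to the left of any product. For an automorphism $\sigma$ of $D(\A)$ and an invertible element $a$ one has the identity $\sigma\,\Ad(a)=\Ad(\sigma(a))\,\sigma$; applying this repeatedly, together with the tilde-action formula $\tw(f_i^\beta)=f_i^{w(\beta)}$ and the fact that $w\mapsto\tw$ is a genuine left $W$-action (so that the product $\ts_{i_1}\cdots\ts_{i_k}$ depends only on $s_{i_1}\cdots s_{i_k}\in W$), I would first establish the straightening formula
\begin{equation*}
 \bs_{i_1}\cdots\bs_{i_k}
 = \Ad\!\left(\prod_{l=1}^{k} f_{i_l}^{\,s_{i_1}\cdots s_{i_{l-1}}(\av_{i_l})}\right)\ts_{i_1}\cdots\ts_{i_k}
\end{equation*}
for an arbitrary word $i_1,\dots,i_k$, where the empty product in the exponent for $l=1$ is understood as the identity.

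With this formula the involution relation is immediate: for the word $(i,i)$ the two exponents are $\av_i$ and $s_i(\av_i)=-\av_i$, whose sum vanishes, so the inner part is $\Ad(f_i^{0})=\Ad(1)=\mathrm{id}$, while the tilde part is $\ts_i\ts_i=\mathrm{id}$; hence $\bs_i^2=1$.

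For each braid relation, both words represent the same element of $W$ (precisely because the braid relation holds in $W$), so the two tilde parts coincide. It therefore suffices to check that the two inner arguments agree as elements of $\A$. Computing the accumulated exponents $s_{i_1}\cdots s_{i_{l-1}}(\av_{i_l})$ by means of $s_i(\av_j)=\av_j-a_{ji}\av_i$, I expect to obtain, in the case $(a_{ij},a_{ji})=(-1,-1)$,
\begin{equation*}
 \bs_i\bs_j\bs_i = \Ad\!\left(f_i^{\av_i}f_j^{\av_i+\av_j}f_i^{\av_j}\right)\ts_i\ts_j\ts_i, \qquad
 \bs_j\bs_i\bs_j = \Ad\!\left(f_j^{\av_j}f_i^{\av_i+\av_j}f_j^{\av_i}\right)\ts_j\ts_i\ts_j,
\end{equation*}
with $\ts_i\ts_j\ts_i=\ts_j\ts_i\ts_j$, so that the two inner arguments are exactly the two sides of the Verma identity of \lemmaref{lemma:Verma} specialized at $\beta=\av_i$, $\gamma=\av_j$. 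The remaining cases are analogous: for $(-1,-2)$ the inner arguments should come out as $f_i^{\av_i}f_j^{2\av_i+\av_j}f_i^{\av_i+\av_j}f_j^{\av_j}$ against $f_j^{\av_j}f_i^{\av_i+\av_j}f_j^{2\av_i+\av_j}f_i^{\av_i}$, for $(-1,-3)$ the six-factor analogue, and for $(0,0)$ simply $f_i^{\av_i}f_j^{\av_j}=f_j^{\av_j}f_i^{\av_i}$ — in every case the two sides of the corresponding Verma identity at $\beta=\av_i$, $\gamma=\av_j$. Since \lemmaref{lemma:Verma} asserts exactly these equalities in $D(\A)$, the inner arguments coincide, whence the two sides of each braid relation are equal automorphisms (note that only the trivial direction of $\Ad$, that equal arguments give equal inner automorphisms, is needed).

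The conceptual content is this clean reduction of the defining relations of $W$ to the Verma identities; the only genuine work is the bookkeeping of the accumulated coroot exponents along each braid word, which I expect to be the main — though entirely routine — obstacle, requiring care with the pairing convention $\bra\av_j,\alpha_i\ket=a_{ji}$. The identical argument applies verbatim in $D(\A_{(P)})$ when $A$ is an Ore domain, since both the fractional powers satisfying \lemmaref{lemma:Verma} and the tilde action are available there.
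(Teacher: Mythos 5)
Your proposal is correct and is essentially the paper's own proof: the paper likewise handles $\bs_i^2=1$ via $\ts_i(f_i^{\av_i})=f_i^{-\av_i}$, and for the braid relations pushes all inner automorphisms to the left (e.g.\ $\bs_i\bs_j\bs_i=\Ad(f_i^{\av_i}f_j^{\av_i+\av_j}f_i^{\av_j})\ts_i\ts_j\ts_i$) and identifies the resulting arguments with the two sides of the Verma identities of \lemmaref{lemma:Verma} at $\beta=\av_i$, $\gamma=\av_j$. Your explicit straightening formula for an arbitrary word is just a mild packaging of the same computation, and your exponent bookkeeping matches the paper's.
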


\begin{proof}
 The relation 
 $\bs_i^2=1$ follows from $\ts_i(f_i^{\av_i})=f_i^{-\av_i}$:
 \begin{align*}
  \bs_i^2 
  = \Ad(f_i^{\av_i})\ts_i\Ad(f_i^{\av_i})\ts_i
  = \Ad(f_i^{\av_i})\Ad(f_i^{-\av_i})\ts_i\ts_i
  = \Ad(f_i^{\av_i}f_i^{-\av_i})\ts_i^2
  = 1.
 \end{align*}
 The other fundamental relations are no more than 
 rewrites of the Verma identities (\lemmaref{lemma:Verma}).
 For example, in the case of $(a_{ij},a_{ji})=(-1,-1)$,
 the relation $\bs_i\bs_j\bs_i=\bs_j\bs_i\bs_j$
 is proved as below. Using the formula 
 $\tw(f_k^\beta)=f_k^{w(\beta)}$ ($w\in W$, $k\in I$, $\beta\in\Qv$),
 we obtain
 \begin{align*}
  \bs_i\bs_j\bs_i
  = \Ad(f_i^{\av_i} f_j^{\av_i+\av_j} f_i^{\av_j})\ts_i\ts_j\ts_i,
\quad
  \bs_j\bs_i\bs_j
  = \Ad(f_j^{\av_j} f_i^{\av_i+\av_j} f_j^{\av_i})\ts_j\ts_i\ts_j.
 \end{align*}
 Therefore the relation $\bs_i\bs_j\bs_i=\bs_j\bs_i\bs_j$
 follows from the Verma identity for $(a_{ij},a_{ji})=(-1,-1)$.
 The other relations are proved by the same argument.
 \qed
\end{proof}

This lemma immediately leads to the following theorem.

\begin{theorem}
\label{theorem:QWGA}
 The mapping $s_i\mapsto\bs_i$ ($i\in I$) induces
 the Weyl group actions on $D(\A)$ 
 (and on $D(\A_{(P)})$ when $A$ is an Ore domain).
 \qed
\end{theorem}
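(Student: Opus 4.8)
The plan is to recognize this theorem as a purely formal consequence of \lemmaref{lemma:bs} via the universal property of a group presentation. Recall from \secref{sec:GCM} that $W$ is defined as the group generated by the symbols $\{s_i\}_{i\in I}$ subject to exactly the relations $s_i^2=1$, together with $s_is_j=s_js_i$, $s_is_js_i=s_js_is_j$, $(s_is_j)^2=(s_js_i)^2$, or $(s_is_j)^3=(s_js_i)^3$ according to the value of $(a_{ij},a_{ji})$. Consequently $W$ carries the universal property that, for any group $H$ and any family $\{\sigma_i\}_{i\in I}$ of elements of $H$ satisfying these same relations, there is a unique group homomorphism $W\to H$ sending $s_i$ to $\sigma_i$.

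First I would take $H=\Aut(D(\A))$ and $\sigma_i=\bs_i$. By the Definition preceding the lemma, each $\bs_i=\Ad(f_i^{\av_i})\ts_i$ is an algebra automorphism of $D(\A)$, hence an element of $\Aut(D(\A))$. By \lemmaref{lemma:bs}, the family $\{\bs_i\}_{i\in I}$ satisfies precisely the defining relations of $W$ inside $\Aut(D(\A))$. Applying the universal property, the assignment $s_i\mapsto\bs_i$ extends uniquely to a group homomorphism $W\to\Aut(D(\A))$, and such a homomorphism is by definition an action of $W$ on $D(\A)$ by algebra automorphisms. For the second assertion, when $A$ is an Ore domain the same Definition already produces $\bs_i\in\Aut(D(\A_{(P)}))$ and \lemmaref{lemma:bs} asserts the identical relations there, so taking $H=\Aut(D(\A_{(P)}))$ yields the Weyl group action on $D(\A_{(P)})$ by the same argument.

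I do not expect any obstacle at the level of this theorem: all of the genuine content has already been absorbed into \lemmaref{lemma:bs}, whose proof in turn rests on the Verma identities of \lemmaref{lemma:Verma}. The step demanding real work was verifying the braid relations for the $\bs_i$, that is, rewriting each composite $\bs_i\bs_j\cdots$ as a conjugation by a product of fractional powers and then matching it against the corresponding Verma identity. Once that verification is in hand, the passage to an honest $W$-action is a formal invocation of the presentation of $W$ and requires no further calculation.
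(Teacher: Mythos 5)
Your proposal is correct and is essentially the paper's own argument: the paper proves \lemmaref{lemma:bs} and then states that this lemma ``immediately leads to'' \theoremref{theorem:QWGA}, i.e.\ the theorem is exactly the formal consequence of the presentation of $W$ that you spell out. Your write-up merely makes explicit the universal-property step that the paper leaves implicit, with all substantive content residing in \lemmaref{lemma:bs} and the Verma identities, just as you say.
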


This theorem can be regarded as 
a both $q$-difference and canonically quantized version of 
Theorem 1.1 and Theorem 1.2 of \cite{NY0012028}.

\begin{definition}
\label{definition:QBWGA}
 We call the Weyl group actions obtained in \theoremref{theorem:QWGA}
 {\em the quantum birational Weyl group actions}
 and denote by $w(x)$ the quantum birational action of $w\in W$ on $x$.
 \qed
\end{definition}

\begin{remark}
 In \cite{Hasegawa2007}, 
 using the quantum dilogarithm, 
 Hasegawa quantizes certain birational Weyl group actions
 of $q$-difference type proposed by Kajiwara, Noumi, and Yamada \cite{KNY}.
 Although the quantum birational Weyl group actions of Hasegawa are different from ours,
 they can be also reconstructed by the method of fractional powers 
 (Section 5 of \cite{Kuroki2008}). 
 The quantum $\tau$-functions for the Hasegawa actions 
 are not discovered at the present time.
 \qed
\end{remark}


\subsection{Explicit formulas for the action}
\label{sec:explicit}

The following formulas immediately 
follow from the definition of $\bs_i$
and the quantum birational Weyl group action 
(\definitionref{definition:QBWGA}):
\begin{align}
 &
 s_i(\beta) = \beta - \bra\beta,\alpha_i\ket\av_i
 \quad (\beta\in\Qv)
 \quad \text{in the Kac-Moody case},
 \label{eq:s_i(beta)}
 \\ &
 s_i(q^\beta) = q^{s_i(\beta)} = q^{\beta - \bra\beta,\alpha_i\ket\av_i}
 \quad (\beta\in\Qv)
 \quad \text{in the $q$-difference case},
 \label{eq:s_i(q^beta)}
 \\ &
 s_i(\tau^\mu)
 = f_i^{\bra\av_i,\mu\ket}\tau^{s_i(\mu)}
 = f_i^{\bra\av_i,\mu\ket}\tau^{\mu-\bra\av_i,\mu\ket\alpha_i}
 \quad (\mu\in P),
 \label{eq:s_i(tau^mu)}
 \\ &
 s_i(f_i) = f_i.
 \label{eq:s_i(f_i)}
\end{align}
The second last formula follows from 
$\tau^\mu f_i^{\av_i}=f_i^{\av_i+\bra\av_i,\mu\ket}\tau^\mu$ and
$\bra\av_i,s_i(\mu)\ket=-\bra\av_i,\mu\ket$:
\begin{align*}
 s_i(\tau^\mu) 
 &
 = f_i^{\av_i} \tau^{s_i(\mu)} f_i^{-\av_i}
 = f_i^{\av_i}  f_i^{-\av_i+\bra\av_i,\mu\ket} \tau^{s_i(\mu)}
 = f_i^{\bra\av_i,\mu\ket} \tau^{s_i(\mu)}.
\end{align*}
In particular, we have \(
 s_i(\tau_i) = f_i\tau^{\Lambda_i-\alpha_i}
\) and \(
 s_i(\tau_j) = \tau_j
\) ($i\ne j$)..

\begin{remark}
\label{remark:s_i(f_j)}
Since $f_j=s_j(\tau_j)\tau^{-\Lambda_j+\alpha_j}$, we have
\begin{equation*}
 s_i(f_j) = s_is_j(\tau_j)s_i(\tau^{-\Lambda_j+\alpha_j})
 \quad (i,j\in I).
\end{equation*}
This means that the quantum birational Weyl group action on 
the dependent variables $f_i$ ($i\in i$) is described 
by the action on the quantum Laurent $\tau$-monomials $\tau^\mu$ ($\mu\in P$).
This observation is one of the motivation of introducing the quantum $\tau$-functions.
For the definition of them, see \secref{sec:def-tau}.
\qed
\end{remark}

We shall write down the explicit formulas of $s_i(f_j)$ for $i\ne j$ as follows.

In the Kac-Moody case, 
we define the commutator $[A,B]$ by $[A,B]=AB-BA$
and $\ad f:U(\g)\to U(\g)$ for $f\in\g$ by
\begin{equation*}
 (\ad f)(a) = fa - af \quad (a\in U(\g)).
\end{equation*}
Then the Serre relations are rewritten 
in the form $(\ad f_i)^{1-a_{ij}}(f_j) = 0$ ($i\ne j$).
More generally, we have
\begin{equation*}
 (\ad f)^k(a) = \sum_{s=0}^k (-1)^s \binom{k}{s} f^{k-s} a f^s
 \quad (k\in\Z_{\geqq0}).
\end{equation*}
By induction on $|n|$, we can obtain
\begin{equation*}
 f_i^nf_jf_i^{-n} 
 = \sum_{k=0}^{-a_{ij}}\binom{n}{k}(\ad f_i)^k(f_j)f_i^{-k}
 \quad (i\ne j, n\in\Z).
\end{equation*}
It immediately follows that
\begin{equation*}
 f_i^{\beta} f_j f_i^{-\beta}
 = \sum_{k=0}^{-a_{ij}}\binom{\beta}{k}(\ad f_i)^k(f_j)f_i^{-k}
 \in \tA^\pa
 \quad (i\ne j, \beta\in\Qv).
\end{equation*}
In particular, we have
\begin{equation}
 s_i(f_j)
 = f_i^{\av_i} f_j f_i^{-\av_i}
 = \sum_{k=0}^{-a_{ij}}\binom{\av_i}{k}(\ad f_i)^k(f_j)f_i^{-k}
 \in \tA^\pa
 \quad (i\ne j).
 \label{eq:s_i(f_j)-KM}
\end{equation}
This result is a canonically quantized version of 
the formula (1.9) of \cite{NY0012028} specialized by $\psi=\varphi_j$.

In the $q$-difference case, 
we define the $q$-commutator $[A,B]_q$ by $[A,B]_q=AB-qBA$
and $\ad f_i:U_q(\g)\to U_q(\g)$ by
\begin{equation*}
 (\ad f_i)(a) = f_i a - q_i^{-h_i} a q_i^{h_i} f_i
 \quad (a\in U_q(\g)).
\end{equation*}
Then the $q$-Serre relations are also rewritten 
in the form $(\ad f_i)^{1-a_{ij}}(f_j) = 0$ ($i\ne j$).
More generally, we have
\begin{align*}
 (\ad f_i)^k(f_j)
 &=
 [f_i,
   [
   \cdots,
     [f_i,
       [f_i,f_j]_{q_i^{a_{ij}}}
     ]_{q_i^{a_{ij}+2}}
   \cdots
   ]_{q_i^{a_{ij}+2(k-2)}}
 ]_{q_i^{a_{ij}+2(k-1)}}
 \\ 
 &= \sum_{s=0}^k 
 (-1)^s q_i^{s(k+a_{ij}-1)} 
 \qbinom{k}{s}_{q_i} f_i^{k-s} f_j f_i^s
 \quad (k\in\Z_{\geqq0}).
\end{align*}
By induction on $|n|$, we can obtain
\begin{equation*}
 f_i^nf_jf_i^{-n}
 = \sum_{k=0}^{-a_{ij}}
   q_i^{(k+a_{ij})(n-k)}\qbinom{n}{k}_{q_i}
   (\ad f_i)^k(f_j)f_i^{-k}
 \quad (i\ne j, n\in \Z).
\end{equation*}
For the derivation of this formula, see also Chapter 7 of \cite{Lusztig}.
It immediately follows that
\begin{equation*}
 s_i(f_j)
 = f_i^{\beta} f_j f_i^{-\beta}
 = \sum_{k=0}^{-a_{ij}}
   q_i^{(k+a_{ij})(\beta-k)}\qbinom{\beta}{k}_{q_i}
   (\ad f_i)^k(f_j)f_i^{-k}
 \in \tA^\pa
 \quad (i\ne j, \beta\in\Qv).
\end{equation*}
In particular, we have
\begin{equation}
 s_i(f_j)
 = f_i^{\av_i} f_j f_i^{-\av_i}
 = \sum_{k=0}^{-a_{ij}}
   q_i^{(k+a_{ij})(\av_i-k)}\qbinom{\av_i}{k}_{q_i}
   (\ad f_i)^k(f_j)f_i^{-k}
 \in \tA^\pa
 \quad (i\ne j).
 \label{eq:s_i(f_j)-q}
\end{equation}
This result is a both $q$-difference and canonically quantized version of 
the formula (1.9) of \cite{NY0012028} specialized by $\psi=\varphi_j$.

Thus we obtain the following lemma.

\begin{lemma}
\label{lemma:in-tApa}
 For any $\beta\in\Qv$ and $i,j\in I$, 
 we have $f_i^\beta f_j f_i^{-\beta}\in\tA^\pa$.
 More precisely, $f_i^{\beta}f_jf_i^{-\beta}$ belongs to
 the subalgebra of $\tA^\pa$ generated by $\{f_i^{\pm1}, f_j, \beta\}$ 
 in the Kac-Moody case and by $\{f_i^{\pm1},f_j,q_i^{\pm\beta}\}$ 
 in the $q$-difference case.
 In particular, we have $s_i(f_j)\in\tA^\pa$.
 \qed
\end{lemma}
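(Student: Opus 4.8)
The plan is to treat this lemma as a clean packaging of the explicit commutation formulas established in the paragraphs immediately preceding its statement, supplemented by a short componentwise argument that reduces the fractional-exponent case to the integer-exponent case. First I would dispose of the diagonal case $i=j$: by definition the fractional power $f_i^\beta=(f_i^{\bra\beta,\lambda\ket})_{\lambda\in P}$ has every component a power of $f_i$, so it commutes with $f_i$, whence $f_i^\beta f_i f_i^{-\beta}=f_i$, which already lies in $A^\pa$.

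For $i\ne j$ the core object is the commutation formula. I would first record the integer-exponent identity
\[
 f_i^n f_j f_i^{-n}=\sum_{k=0}^{-a_{ij}}\binom{n}{k}(\ad f_i)^k(f_j)f_i^{-k}\qquad(n\in\Z)
\]
in the Kac-Moody case (and its $q$-analogue, carrying the extra factor $q_i^{(k+a_{ij})(n-k)}\qbinom{n}{k}_{q_i}$, in the $q$-difference case), proved by induction on $|n|$ from the Pascal-type recursion for the adjoint action. The decisive structural input here, and the only genuinely non-trivial point, is that the a priori unbounded sum truncates at $k=-a_{ij}$ thanks to the (q-)Serre relation $(\ad f_i)^{1-a_{ij}}(f_j)=0$; without this truncation the conjugate would at best live in a completion rather than in $\tA^\pa$ itself.

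To pass to an arbitrary $\beta\in\Qv$ I would argue componentwise inside $\tA^P$. Since $\phi_\lambda$ is an algebra homomorphism with $\phi_\lambda(f_i^\beta)=f_i^{\bra\beta,\lambda\ket}$, and since $\binom{\beta}{k}$ (resp.\ $q_i^{(k+a_{ij})(\beta-k)}\qbinom{\beta}{k}_{q_i}$) is a polynomial expression in the central parameter variable $\beta$ (resp.\ in $q_i^{\pm\beta}$), applying $\phi_\lambda$ both to the conjugate $f_i^\beta f_j f_i^{-\beta}$ and to the candidate right-hand side of \eqref{eq:s_i(f_j)-KM} (resp.\ \eqref{eq:s_i(f_j)-q}) reproduces the integer-exponent identity above with $n=\bra\beta,\lambda\ket$. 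Because two elements of $\tA^P=\prod_{\lambda\in P}\tA$ coincide exactly when all their $\lambda$-components agree, the two expressions are equal in $\tA^P$.

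It remains to read off membership. The right-hand side is a finite sum each of whose factors — the coefficients $\binom{\beta}{k}$ (resp.\ $q_i^{(k+a_{ij})(\beta-k)}\qbinom{\beta}{k}_{q_i}$), the iterated brackets $(\ad f_i)^k(f_j)$, and the negative powers $f_i^{-k}$ — already lies in $\tA^\pa$; indeed all of them lie in the subalgebra generated by $\{f_i^{\pm1},f_j,\beta\}$ in the Kac-Moody case and by $\{f_i^{\pm1},f_j,q_i^{\pm\beta}\}$ in the $q$-difference case (here $q_i^{\pm\beta}=q^{\pm d_i\beta}$ is a genuine parameter variable since $d_i\beta\in\Qv$). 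This establishes both assertions of the lemma. Finally, specializing $\beta=\av_i$ and recalling that $s_i(f_j)=f_i^{\av_i}f_jf_i^{-\av_i}$ by \definitionref{definition:QBWGA} yields $s_i(f_j)\in\tA^\pa$, which is the ``in particular'' statement.
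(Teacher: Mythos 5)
Your proposal is correct and follows essentially the same route as the paper: the paper derives the integer-exponent conjugation formulas $f_i^n f_j f_i^{-n}=\sum_{k=0}^{-a_{ij}}\binom{n}{k}(\ad f_i)^k(f_j)f_i^{-k}$ (and its $q$-analogue) by induction on $|n|$ using the truncation from the (q-)Serre relations, and then states that the fractional-power identity \eqref{eq:s_i(f_j)-KM}/\eqref{eq:s_i(f_j)-q} ``immediately follows,'' which is precisely the componentwise $\phi_\lambda$-argument you spell out. The only additions in your write-up are making that componentwise step explicit and disposing of the trivial case $i=j$, both of which the paper leaves implicit.
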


\begin{example}
\label{example:s_i(f_j)}
Suppose that  $a_{ij}=-1$ and $d_i=1$. 
In the Kac-Moody case, we have
\begin{equation*}
 s_i(f_j) 
 = f_j + \av_i [f_i,f_j] f_i^{-1}
 = (1-\av_i) f_j + \av_i f_if_jf_i^{-1}.
\end{equation*}
In the $q$-difference case, we have
\begin{equation*}
 s_i(f_j) 
 = q^{-\av_i} f_j + [\av_i]_q[f_i,f_j]_{q^{-1}}f_i^{-1}
 = [1-\av_i]_q f_j + [\av_i]_q f_if_jf_i^{-1}.
\end{equation*}
This formula shall be used in \exampleref{example:tau(s_is_j(Lambda_j))}.
\qed
\end{example}

\begin{remark}
 Suppose that $A$ is an Ore domain.
 It follows from \lemmaref{lemma:in-tApa} that 
 the quantum birational Weyl group
 action preserves $D(A^\pa_{(P)})$ and $A^\pa_{(P)}$.
 Therefore $W$ acts on them.
 This result is an extended version of Theorem 4.3 of \cite{Kuroki2008}.
 Since the algebra $D(A^\pa_{(P)})$ does not contain
 the fractional powers $f_i^\beta$, 
 the quantum birational Weyl group action on $D(A^\pa_{(P)})$
 is characterized by the explicit formulas written down 
 in this subsection.
 \qed
\end{remark}

\begin{example}
 Under the setting of \exampleref{example:x-d-Verma}, we have
 \begin{align*}
  &
  s_1(x)  = x, \quad
  s_1(\d) = x^{\av_1}\d x^{-\av_1} = \d - \frac{\av_1}{x},
  \\ &
  s_2(x) = \d^{\av_2}x\d^{-\av_2} = x + \av_2\d^{-1}, \quad
  s_2(\d) = \d,
  \\ &
  s_i(\av_i) = -\av_i, \quad s_1(\av_2) = s_2(\av_1) = \av_1+\av_2.
 \end{align*}
 These formulas define the action of $W=S_3=\bra s_1,s_2\ket$ 
 on $A^\pa_{(P)}=\C[x,\d,\av_1,\av_2]_{(P)}$.
 A non-commutative rational function $a\in\C(x,\d,\av_1,\av_2)$ 
 is an element of $\C[x,\d,\av_1,\av_2]_{(P)}$
 if and only if, for any $\lambda\in P$,
 there exist some $b,s\in\C[x,\d,\av_1,\av_2]$ 
 such that $a=bs^{-1}$ and $s$ does not vanish 
 even if $\bra\av_i,\lambda\ket$'s are substituted into $\av_i$'s.
 \qed 
\end{example}


\section{Quantum $\tau$-functions}
\label{sec:QTF}


\subsection{Definition of quantum $\tau$-functions}
\label{sec:def-tau}

The Tits cone is defined to 
be $WP_+=\{\,w(\mu)\mid w\in W, \mu\in P_+\,\}\subset P$.
For each $\nu\in WP_+$, 
we define the quantum $\tau$-function $\tau_{(\nu)}$ by
\begin{equation*}
 \tau_{(\nu)} = w(\tau^\mu), \quad
 \nu = w(\mu), \quad w\in W, \quad \mu\in P_+.
\end{equation*}
Note that $w(\tau^\mu)$ depends only on $\nu=w(\mu)$ 
owing to the property $s_i(\tau_j)=\tau_j$ ($i\ne j$)
of the quantum birational Weyl group action.

\begin{example}
\label{example:tau(s_is_j(Lambda_j))}
In the $q$-difference case, if $i\ne j$, then we have
\begin{align*}
 &
 \tau_{(\Lambda_j)} = \tau_j, \quad
 \tau_{(s_j(\Lambda_j))} = s_j(\tau_j) = f_j \,\tau^{s_j(\Lambda_j)},
 \\ &
 \tau_{(s_is_j(\Lambda_j))} = s_is_j(\tau_j)
 = f_i^{\av_i} f_j \tau^{s_is_j(\Lambda_j)} f_i^{-\av_i}
 = f_i^{\av_i} f_j f_i^{-\av_i-a_{ij}}
   \tau^{s_is_j(\Lambda_i)} 
 \\ & 
 \hphantom{\tau_{(s_is_j(\Lambda_j))}}
 = \left(
    \sum_{k=0}^{-a_{ij}}
    q_i^{(k+a_{ij})(\av_i-k)}\qbinom{\av_i}{k}_{q_i}
    (\ad f_i)^k(f_j) f_i^{-a_{ij}-k}
  \right)
  \tau^{s_is_j(\Lambda_i)}.
\end{align*}
Note that the quantum $\tau$-function $\tau_{(s_is_j(\Lambda_j))}$ is 
a polynomial in $f_i,f_j$ and 
a Laurent polynomial in $q_i^{\av_i}$.
In particular, when $a_{ij}=-1$ and $d_i=1$, we have
\begin{align*}
 \tau_{(s_is_j(\Lambda_j))} = s_is_j(\tau_j)
 &= 
 \left(q^{-\av_i}f_jf_i + [\av_j]_q[f_i,f_j]_{q^{-1}}\right)
 \tau^{s_js_i(\Lambda_i)}
 \\
 &= 
 \left([1-\av_i]_q f_jf_i + [\av_i]_q f_if_j\right)
 \tau^{s_js_i(\Lambda_i)}.
\end{align*}
The last expression shall be used for the proof of 
the quantum $q$-Hirota-Miwa equation
in \secref{sec:QHME}.
\qed
\end{example}

\begin{example}
Assume that the GCM is of type $A_3$:
$I=\{1,2,3\}$, $a_{ii}=2$, $a_{i,i+1}=a_{i+1,i}=-1$ ($i=1,2$), 
$a_{ij}=0$ ($|i-j|\geqq2$), $d_i=1$.
Let $A$ be the associative algebra over $\C$ generated by $f_1,f_2,f_3$
with fundamental relations 
$[f_1,f_2]=[f_2,f_3]=1$, $[f_1,f_3]=0$.

We set $(i_1,i_2,\ldots,i_6)=(1,2,3,1,2,1)$, 
$w_k=s_{i_k}\cdots s_{i_2}s_{i_1}\in W$, and \(
 \beta_k := w_{k-1}^{-1}(\av_{i_k})
 = s_{i_1}\cdots s_{i_{k-1}}(\av_{i_k})
\) ($k=1,2,\ldots,6$).
We have 
$\beta_1=\av_1$, 
$\beta_2=\av_1+\av_2$,
$\beta_3=\av_1+\av_2+\av_3$,
$\beta_4=\av_2$,
$\beta_5=\av_2+\av_3$, and
$\beta_6=\av_3$.

Then the quantum $\tau$-functions 
$\tau_{(w_k(\Lambda_1))}$ ($k=1,2,\ldots,6$) can be written in the form 
$\tau_{(w_k(\Lambda_1))}=\tw_k(X_k)\tau^{w_k(\Lambda_1)}$,
where $X_k$ ($k=1,2,\ldots,6$) are calculated as follows:
\begin{align*}
 X_1 &= f_1^{-\beta_1}f_1^{\beta_1+1} = f_1,
 \\ 
 X_2 &= f_2^{-\beta_2}X_1f_2^{\beta_2+1} 
 = \left(f_1+\tfrac{\beta_2}{f_2}\right)f_2 = f_1f_2+\beta_2,
 \\
 X_3 &= f_3^{-\beta_3}X_2f_3^{\beta_3+1}
 = \left(f_1\left(f_2+\tfrac{\beta_3}{f_3}\right)+\beta_2\right)f_3
 = f_1f_2f_3 + \beta_3f_1 + \beta_2f_3,
 \\ 
 X_4 &= f_1^{-\beta_4}X_3f_1^{\beta_4}
 = f_1\left(f_2-\tfrac{\beta_4}{f_1}\right)f_3+\beta_3f_1+\beta_2f_3
 = f_1f_2f_3 + \beta_3f_1 + (\beta_2-\beta_4)f_3,
 \\
 X_5 &= f_2^{-\beta_5}X_4f_2^{\beta_5} 
 = \left(f_1+\tfrac{\beta_5}{f_2}\right)f_2\left(f_3-\tfrac{\beta_5}{f_2}\right)
 + \beta_3\left(f_1+\tfrac{\beta_5}{f_2}\right)
 + (\beta_2-\beta_4)\left(f_3-\tfrac{\beta_5}{f_2}\right)
 \\
 &= f_1f_2f_3 + (\beta_3-\beta_5)f_1 + (\beta_2-\beta_4+\beta_5)f_3
 + (\underbrace{-\beta_2+\beta_4-\beta_5+\beta_3}_{\text{cancels out}})
   \tfrac{\beta_5}{f_2},
 \\[-\bigskipamount]
 &= f_1f_2f_3 + (\beta_3-\beta_5)f_1 + (\beta_2-\beta_4+\beta_5)f_3,
 \\
 X_6 &= f_1f_2f_3 + \beta_6 f_1 + (\beta_3-\beta_6)f_3.
\end{align*}
Thus the all quantum $\tau$-functions 
$\tau_{(w_k(\Lambda_1))}$ ($k=1,2,\ldots,6$) are polynomials in 
$f_i$ and $\av_i$ ($i\in I$).
\qed
\end{example}


\subsection{The quantum $q$-Hirota-Miwa equation}
\label{sec:QHME}

In this subsection, as a supporting evidence for the correctness of
the definition of the quantum $\tau$-functions in the previous subsection, 
we shall show that the quantum $\tau$-functions of type $A^{(1)}_{n-1}$ satisfy
the quantum $q$-Hirota-Miwa equation \eqref{eq:QHME} in the $q$-difference case.

Assume that $n\geqq 3$ and the GCM $[a_{ij}]_{i,j\in I}$ is of type $A^{(1)}_{n-1}$:
$I=\Z/n\Z$, $a_{ii}=2$, $a_{i,i\pm1}=-1$, $a_{ij}=0$ ($j\ne i,i\pm1$),
and $d_i=1$.
Denote the image of $k\in\Z$ in $I=\Z/n\Z$ by $\overline{k}$.
Assume that the algebra automorphism of $A$ can be defined by 
$f_i\mapsto f_{i+1}$ for $i\in I$.

We define the coroot lattice $\Qv$ to be the free $\Z$-module generated 
by $\deltav$ and $\epsv_k$ ($k=1,2,\ldots,n$).
We set $\epsv_k$ ($k\in\Z$) by the quasi-periodicity $\epsv_{k+n}=\epsv_k-\deltav$.
Define the simple coroots by $\av_k=\epsv_k-\epsv_{k+1}$ ($k\in\Z$). 
Then we have $\av_{k+n}=\av_k$ and put $\av_{\overline{k}}=\av_k$. 
Since $\av_0=\deltav+\epsv_n-\epsv_1$, 
the set $\{\av_k\}_{k=0}^{n-1}$ is linearly independent over $\Z$
and $\sum_{k=0}^{n-1}\av_k=\deltav$.

The weight lattice $P$ is given by $P=\Hom(\Qv,\Z)$.
Denote by $\Lambda_0$, $\eps_k$ ($k=1,2,\ldots,n$)
the dual basis of $\deltav$, $\epsv_k$ ($k=1,2,\dots,n$).
We set $\eps_k$ ($k\in\Z$) by the periodicity $\eps_{k+n}=\eps_k$.
We define $\varpi_k$ ($k\in\Z$) by
$\varpi_k=\eps_1+\eps_2+\cdots+\eps_k$ ($k\in\Z_{\geqq0}$)
and the quasi-periodicity $\varpi_{k+n}=\varpi_k+\varpi_n$.
Then $\Lambda_0$ and $\varpi_k$ ($k=1,2,\ldots,n$) span the weight lattice $P$.
We define $\Lambda_k$ ($k\in\Z$) by
$\Lambda_k=\Lambda_0+\varpi_k$.
Then $\Lambda_{k+n}=\Lambda_k+\varpi_n$.
Since $\bra\av_k,\varpi_n\ket=0$ ($k\in\Z$), 
we have $\bra\av_k,\Lambda_l\ket=\delta_{\overline{k},\overline{l}}$.

We define the simple roots $\alpha_k$ ($k\in\Z$) by \(
 \alpha_k
 =-\Lambda_{k-1}+2\Lambda_k-\Lambda_{k+1}
 = \eps_k-\eps_{k+1}
\). Then we have $\alpha_{k+n}=\alpha_k$
and $\bra\av_k,\alpha_l\ket=a_{\overline{k},\overline{l}}$. 
Put $\alpha_{\overline{k}}=\alpha_k$.
In this setting, we have $\sum_{k=0}^{n-1}\alpha_k=0$.

Set $s_k=s_{\overline{k}}$ and $f_k=f_{\overline{k}}$ for $k\in\Z$.
The Weyl group actions on $\Qv$ and $P$ are characterized by
the following formulas:
\begin{align*}
 &
 s_k(\deltav)=\deltav, \quad
 s_k(\epsv_k) = \epsv_{k+1}, \quad
 s_k(\epsv_{k+1}) = \epsv_k, \quad
 s_k(\epsv_l)=\epsv_l \quad (\overline{l}\ne\overline{k},\overline{k+1}), 
 \\ &
 s_0(\Lambda_0) 
 = \Lambda_{-1}-\Lambda_0+\Lambda_1
 = \Lambda_0 - \eps_n + \eps_1,
 \quad
 s_k(\Lambda_0) = \Lambda_0 \quad (\overline{k}\ne\overline{0}), 
 \\ &
 s_k(\eps_k) = \eps_{k+1}, \quad
 s_k(\eps_{k+1}) = \eps_k, \quad
 s_k(\eps_l)=\eps_l \quad (\overline{l}\ne\overline{k},\overline{k+1}).
\end{align*}

We define the $\tau$-variables $\tau_k$ ($k\in\Z$) by $\tau_k=\tau^{\Lambda_k}$.
Then $\tau_{k+n}=\tau_k\tau^{\varpi_n}$. 
Note that $\tau^{\varpi_n}$ commutes with all $q^{\av_k}$.

\begin{lemma}
\label{lemma:QHME}
  For any $k\in\Z$, we have
 \[
   [\av_{k+1}]_q       \tau_k\,s_k s_{k+1}(\tau_{k+1})
  +[\av_k]_q           s_{k+1}s_k(\tau_k)\,\tau_{k+1}
  =[\av_k+\av_{k+1}]_q s_k(\tau_k)\,s_{k+1}(\tau_{k+1}).
 \]
\end{lemma}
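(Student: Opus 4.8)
The plan is to reduce the identity to an explicit computation in $\A$ via the formulas of \secref{sec:explicit}, and then to a single $q$-number addition formula. Fix $k$; since $n\geqq 3$, the indices $k,k+1$ span a rank-two subsystem of type $A_2$, so $a_{k,k+1}=a_{k+1,k}=-1$ and $d_k=d_{k+1}=1$, and the explicit formulas of \exampleref{example:s_i(f_j)} and \exampleref{example:tau(s_is_j(Lambda_j))} apply. First I would put each of the six $\tau$-functions occurring in the statement into the normal form (a coefficient in $f_k,f_{k+1}$ and the central $q^{\pm\av}$) times a single $\tau^\nu$: using \eqref{eq:s_i(tau^mu)}, the relation $s_i(\tau_i)=f_i\tau^{\Lambda_i-\alpha_i}$, and $s_i(\Lambda_j)=\Lambda_j-\delta_{ij}\alpha_i$, $s_i(\alpha_j)=\alpha_j-a_{ij}\alpha_i$, one obtains $\tau_k=\tau^{\Lambda_k}$, $s_k(\tau_k)=f_k\tau^{\Lambda_k-\alpha_k}$, $s_{k+1}(\tau_{k+1})=f_{k+1}\tau^{\Lambda_{k+1}-\alpha_{k+1}}$, together with $s_ks_{k+1}(\tau_{k+1})=\bigl([1-\av_k]_q f_{k+1}f_k+[\av_k]_q f_k f_{k+1}\bigr)\tau^{\Lambda_{k+1}-\alpha_k-\alpha_{k+1}}$ and $s_{k+1}s_k(\tau_k)=\bigl([1-\av_{k+1}]_q f_k f_{k+1}+[\av_{k+1}]_q f_{k+1}f_k\bigr)\tau^{\Lambda_k-\alpha_k-\alpha_{k+1}}$ from \exampleref{example:tau(s_is_j(Lambda_j))}.

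The key structural observation is that all three products in the lemma carry the same total $\tau$-weight $\mu=\Lambda_k+\Lambda_{k+1}-\alpha_k-\alpha_{k+1}$, so a common factor $\tau^\mu$ can be brought to the far right of each term and cancelled. To do this I would push every $\tau^\nu$ rightward through the coefficients using $\tau^\nu f_i=f_i\tau^\nu$ and $\tau^\nu[\beta]_q=[\beta+\bra\beta,\nu\ket]_q\tau^\nu$. The only nontrivial shift occurs in the first term $[\av_{k+1}]_q\,\tau_k\,s_ks_{k+1}(\tau_{k+1})$: moving $\tau^{\Lambda_k}$ past the coefficient replaces $\av_k$ by $\av_k+1$ (since $\bra\av_k,\Lambda_k\ket=1$) while leaving $\av_{k+1}$ fixed (since $\bra\av_{k+1},\Lambda_k\ket=0$), so $[1-\av_k]_q\mapsto[-\av_k]_q=-[\av_k]_q$ and $[\av_k]_q\mapsto[\av_k+1]_q$; the leading $[\av_{k+1}]_q$ stands to the left of $\tau_k$ and is unaffected. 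In the second term and on the right-hand side the two $\tau$'s are already adjacent, so no shift is produced there.

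After cancelling $\tau^\mu$ the claim becomes an identity in $\A$ in the non-commuting $f_k,f_{k+1}$ with central coefficients in $q^{\pm\av_k},q^{\pm\av_{k+1}}$. Collecting the coefficient of $f_{k+1}f_k$ on the left gives $-[\av_{k+1}]_q[\av_k]_q+[\av_k]_q[\av_{k+1}]_q=0$, so those terms drop out, and the coefficient of $f_k f_{k+1}$ is $[\av_{k+1}]_q[\av_k+1]_q+[\av_k]_q[1-\av_{k+1}]_q=[\av_{k+1}]_q[\av_k+1]_q-[\av_k]_q[\av_{k+1}-1]_q$. The hard part, and really the only content beyond bookkeeping, is then the $q$-number addition formula $[m]_q[n+1]_q-[m-1]_q[n]_q=[m+n]_q$, which holds identically for commuting symbols and follows immediately from $[m]_q=(q^m-q^{-m})/(q-q^{-1})$; applied with $m=\av_{k+1}$, $n=\av_k$ it identifies this coefficient with $[\av_k+\av_{k+1}]_q$, matching $[\av_k+\av_{k+1}]_q f_k f_{k+1}$ on the right-hand side and finishing the proof. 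The main risk in executing this is sign and shift bookkeeping in the $\tau$-conjugation of the first term, so I would sanity-check that step by specializing $\av_k,\av_{k+1}$ to integers through the homomorphisms $\phi_\lambda$, which reduces the $q$-number identity to an elementary one.
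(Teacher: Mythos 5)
Your proposal is correct and follows essentially the same route as the paper's proof: normalize all six $\tau$-functions via the formulas of \exampleref{example:tau(s_is_j(Lambda_j))}, shift the coefficients $[1-\av_k]_q\mapsto[-\av_k]_q$ and $[\av_k]_q\mapsto[\av_k+1]_q$ when moving $\tau_k$ through the first term, cancel the $f_{k+1}f_k$-terms, and match the $f_kf_{k+1}$-coefficient with $[\av_k+\av_{k+1}]_q$. The only cosmetic differences are that you write the common right factor as $\tau^{\Lambda_k+\Lambda_{k+1}-\alpha_k-\alpha_{k+1}}$ where the paper writes $\tau_{k-1}\tau_{k+2}$, and that you state explicitly the $q$-number identity $[m]_q[n+1]_q-[m-1]_q[n]_q=[m+n]_q$ which the paper uses tacitly.
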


\begin{proof}
 The definition of the quantum birational Weyl group action
 immediately leads to the following formulas:
 \begin{align*}
  &
  s_k(\tau_k) = f_k \frac{\tau_{k-1}\tau_{k+1}}{\tau_k}, 
  \quad
  s_{k+1}(\tau_{k+1}) = f_{k+1} \frac{\tau_k\tau_{k+2}}{\tau_{k+1}},
  \\ &
  s_k s_{k+1}(\tau_{k+1}) =
  \left([1-\av_k]_qf_{k+1}f_k + [\av_k]_q f_k f_{k+1}\right)
  \frac{\tau_{k-1}\tau_{k+2}}{\tau_k},
  \\ &
  s_{k+1}s_k(\tau_k) =
  \left([1-\av_{k+1}]_q f_k f_{k+1} + [\av_{k+1}]_q f_{k+1}f_k\right)
  \frac{\tau_{k-1}\tau_{k+2}}{\tau_{k+1}}.
 \end{align*}
 See \exampleref{example:tau(s_is_j(Lambda_j))}.
 Using $\tau_k q^{\av_k} = q^{\av_k+1} \tau_k$, we obtain
 \begin{align*}
  &
  [\av_{k+1}]_q \tau_k s_k s_{k+1}(\tau_{k+1}) =
  [\av_{k+1}]_q \left([-\av_k]_q f_{k+1}f_k + [\av_k+1]_q f_k f_{k+1}\right)
  \tau_{k-1}\tau_{k+2},
  \\ & 
  [\av_k]_q s_{k+1}s_k(\tau_k)\tau_{k+1} =
  [\av_k]_q \left([1-\av_{k+1}]_q f_k f_{k+1} + [\av_{k+1}]_q f_{k+1}f_k\right)
  \tau_{k-1}\tau_{k+2}.
 \end{align*}
 Add the right-hand sides of the two formulas. 
 Then the $f_{k+1}f_k$-terms cancel out and we get
 \begin{equation*}
  [\av_k+\av_{k+1}]_q f_k f_{k+1}\tau_{k-1}\tau_{k+2}
  = [\av_k+\av_{k+1}]_q s_k(\tau_k) s_{k+1}(\tau_{k+1}).
 \end{equation*}
 The lemma has been proved. 
 \qed
\end{proof}

\begin{remark}
\label{remark:QHME}
In the quantum case, we must be careful to non-commutativity.
In the above lemma, 
we have the following commutativity and non-commutativity:
\begin{enumerate}
\item Although $\tau_k$ and $s_k s_{k+1}(\tau_{k+1})$ does not commute,
each of them commutes with $[\av_{k+1}]_q$.

\item Although $s_{k+1}s_k(\tau_k)$ and $\tau_{k+1}$ does not commute,
each of them commutes with $[\av_k]_q$.

\item $s_k(\tau_k)$ and $s_{k+1}(\tau_{k+1})$ commutes.
Although each of them does not commutes with $[\av_k+\av_{k+1}]_q$,
their product $s_k(\tau_k)s_{k+1}(\tau_{k+1})$ commutes with $[\av_k+\av_{k+1}]_q$.
\qed
\end{enumerate}
\end{remark}

The extended Weyl group $\tW$ is defined by the semi-direct product
$\tW = W\rtimes\bra\pi\ket$ with defining relations 
$\pi s_k = s_{k+1} \pi$ ($k\in\Z$).
The actions of $\pi$ on $\Qv$ and $P$ are given by
$\pi(\deltav)=\deltav$,
$\pi(\epsv_k)=\epsv_{k+1}$,
$\pi(\Lambda_0)=\Lambda_1=\Lambda_0+\eps_1$, and 
$\pi(\eps_k)=\eps_{k+1}$.
These formulas define the extended Weyl group action on $\Qv$ and $P$
which preserves the canonical pairing between them.
Then we have $\pi(\av_k)=\av_{k+1}$, 
$\pi(\varpi_k)=\varpi_{k+1}-\eps_1$, and
$\pi(\Lambda_k)=\Lambda_{k+1}$.

We define the actions of $\pi$ on $f_k$ and $\tau_k$ by
$\pi(f_k)=f_{k+1}$ and $\pi(\tau_k)=\tau_{k+1}$.
Then we have $\pi(\tau^\mu)=\tau^{\pi(\mu)}$ ($\mu\in P$).

We define $T_k\in\tW$ ($i\in \Z$) by 
$T_k=s_{k-1}\cdots s_2s_1\pi s_{n-1}s_{n-2}\cdots s_k$
($k=1,2,\ldots,n$) and the periodicity $T_{k+n}=T_k$.
Then $T_k$ ($k\in\Z$) mutually commute and we have
\begin{align*}
 &
 s_k T_k s_k^{-1} = T_{k+1}, \quad
 s_k T_{k+1} s_k^{-1} = T_k, \quad
 s_k T_l s_k^{-1} = T_l \quad (\overline{l}\ne\overline{k},\overline{k+1}), \quad
 \pi T_k \pi^{-1}= T_{k+1},
 \\ & 
 T_k(\deltav)=\deltav, \quad
 T_k(\epsv_l)=\epsv_l-\delta_{\overline{k},\overline{l}}\deltav, \quad
 T_k(\eps_l)=\eps_l, \quad
 T_k(\Lambda_0)=\Lambda_0+\eps_k.
\end{align*} 
For $m=\sum_{k=1}^n m_k\eps_k\in L=\bigoplus_{k=1}^n\Z\eps_k$,
we put $T^m = \prod_{k=1}^n T_k^{m_k}$.
Then we have $T^m(\Lambda_k)=\Lambda_k+m$.

We define $\epsv_k(m)$, $\av_k(m)$, and $\tau_k(m)$ for $m\in L$ by
\begin{align*}
 &
 \epsv_k(m) = T^m(\epsv_k) = \epsv_k - m_k\deltav, 
 \quad
 \av_k(m)=T^m(\av_k)=\av_k+(m_{k+1}-m_k)\deltav,
 \\ &
 \tau_k(m) = T^m(\tau_k) = \tau_{(\Lambda_k+m)}.
\end{align*}
Here we assume that $m_{k+n}=m_k$ for $k\in\Z$.
Then we have
\begin{alignat*}{2}
&
\Lambda_k = \Lambda_{k-1}+\eps_k, \quad
& &
\Lambda_{k+1} = \Lambda_{k-1}+\eps_k+\eps_{k+1}, 
\\ &
s_k(\Lambda_k) = \Lambda_{k-1}+\eps_{k+1}, \quad
& &
s_{k+1}s_k(\Lambda_{k+1}) = \Lambda_{k-1}+\eps_{k+2},
\\ &
s_{k+1}(\Lambda_{k+1}) = \Lambda_{k-1}+\eps_k+\eps_{k+2}, \quad
& &
s_k s_{k+1}(\Lambda_{k+1}) = \Lambda_{k-1}+\eps_{k+1}+\eps_{k+2}.
\end{alignat*}
Therefore, applying $T^m$ to the both-hand sides of the formula in \lemmaref{lemma:QHME},
we obtain
\begin{align*}
 &
   [\av_{k+1}(m)]_q \tau_{k-1}(m+\eps_k)     \tau_{k-1}(m+\eps_{k+1}+\eps_{k+2})
 \\ & \qquad
 + [\av_k(m)]_q     \tau_{k-1}(m+\eps_{k+2}) \tau_{k-1}(m+\eps_k+\eps_{k+1})
 \\ & \qquad\qquad
 = [\av_k(m)+\av_{k+1}(m)]_q \tau_{k-1}(m+\eps_{k+1}) \tau_{k-1}(m+\eps_k+\eps_{k+2}).
\end{align*}
This equation is rewritten in the following cyclically symmetric form:
\begin{align}
 &
   [\epsv_{k+1}(m)-\epsv_{k+2}(m)]_q \tau_{k-1}(m+\eps_k)     \tau_{k-1}(m+\eps_{k+1}+\eps_{k+2})
 \notag
 \\ & \qquad
 + [\epsv_k(m)-\epsv_k(m)]_q          \tau_{k-1}(m+\eps_{k+2}) \tau_{k-1}(m+\eps_k+\eps_{k+1})
 \label{eq:QHME}
 \\ & \qquad\qquad
 + [\epsv_{k+2}(m)-\epsv_k(m)]_q     \tau_{k-1}(m+\eps_{k+1}) \tau_{k-1}(m+\eps_k+\eps_{k+2})
 =0.
 \notag
\end{align}
We call this equation {\em the quantum $q$-Hirota-Miwa equation}.
The original (non-quantum) Hirota-Miwa equation is found in  
Equation (2.1) of \cite{Hirota} and in Equation (2.6) of \cite{Miwa}.
Although the method for the above derivation is same as 
the one in Section 4.5 of \cite{Noumi},
we must be careful to the non-commutativity mentioned in \remarkref{remark:QHME}.


\subsection{Definition of regularity of the quantum $\tau$-functions}
\label{sec:def-reg}

\begin{definition}
 For each $\nu\in WP_+$, the quantum $\tau$-function $\tau_{(\nu)}$ 
 is said to be {\em regular} if $\tau_{(\nu)}\in A^\pa$, namely, 
 if $\tau_{(\nu)}$ is a polynomial in $\{f_i,\av_i\}_{i\in I}$
 for the Kac-Moody case, and 
 if $\tau_{(\nu)}$ is a polynomial in $\{f_i,q^{\pm\av_i}\}_{i\in I}$
 for the $q$-difference case.
 \qed
\end{definition}

As mentioned in the introduction, the regularity of the classical $\tau$-functions 
is shown by Noumi and Yamada in \cite{NY0012028}.
The rest of this paper is devoted to 
the proof of the regularity of the quantum $\tau$-functions.

It is sufficient for the proof of the regularity of the quantum $\tau$-functions for any $A$
to obtain the regularity for $A=U(\n_-)$ in the Kac-Moody case
and for $A=U_q(\n_-)$ in the $q$-difference case.
Therefore, in the following subsections, we set
$A=U_-=U(\n_-)$, 
$A^\pa=U_-^\pa=U_-[\beta|\beta\in\Qv]$, 
$\tA^\pa=\tU_-^\pa=U_-[f_i^{-1},\beta|i\in I,\beta\in\Qv]$, 
$U=U(\g)$
in the Kac-Moody case, 
and
$A=U_-=U_q(\n_-)$, 
$A^\pa=U_-^\pa=U_-[q^\beta|\beta\in\Qv]$, 
$\tA^\pa=\tU_-^\pa=U_-[f_i^{-1},q^\beta|i\in I,\beta\in\Qv]$, 
$U=U_q(\g)$
in the $q$-difference case.
Moreover we assume that 
$\{\alpha_i\}_{i\in I}$ is also linearly independent over $\Z$.


\subsection{Relation to singular vectors in Verma modules}
\label{sec:sing}

In the following we fix $\mu\in P_+$ and assume that 
$w_n = s_{i_n}\cdots s_{i_2}s_{i_1}$ is a reduced expressions of $w_n\in W$
for each $n=0,1,\ldots,N$.
We set 
$\beta_n = w_{n-1}^{-1}(\av_{i_n})=s_{i_1}s_{i_2}\cdots s_{i_{n-1}}(\av_{i_n})$
for $n=1,2,\ldots,N$.

By the definition of the quantum birational Weyl group action 
(\definitionref{definition:QBWGA}), 
for $n=0,1,\ldots,N$, we have
\begin{align*}
  \tw_n^{-1}\tau_{(w_n(\mu))} 
  &
  =\tw_n^{-1}
    \Ad(f_{i_n}^{\av_{i_n}})\ts_{i_n}\cdots
    \Ad(f_{i_2}^{\av_{i_2}})\ts_{i_2}
    \Ad(f_{i_1}^{\av_{i_1}})\ts_{i_i}(\tau^\mu)
  \\ &
  = 
    \Ad(f_{i_n}^{-\beta_n})\cdots
    \Ad(f_{i_2}^{-\beta_2})
    \Ad(f_{i_1}^{-\beta_1})(\tau^\mu)
  \\ &
  = 
    f_{i_n}^{-\beta_n}\cdots f_{i_2}^{-\beta_2}f_{i_1}^{-\beta_1}
    \tau^\mu
    f_{i_1}^{\beta_1} f_{i_2}^{\beta_2} \cdots  f_{i_n}^{\beta_n}
  \\ &
  = 
    f_{i_n}^{-\beta_n}\cdots 
    f_{i_2}^{-\beta_2}
    f_{i_1}^{-\beta_1}
    f_{i_1}^{\beta_1+\bra\beta_1,\mu\ket}
    f_{i_2}^{\beta_2+\bra\beta_2,\mu\ket}\cdots 
    f_{i_n}^{\beta_n+\bra\beta_n,\mu\ket}
    \tau^\mu.
\end{align*}
Thus we obtain
\begin{equation*}
 \tw_n^{-1}\tau_{(w_n(\mu))} = \Phi_n^{-1}\Psi_n\tau^\mu, 
\end{equation*}
where $\Phi_n$ and $\Psi_n$ are given by
\begin{equation*}
 \Phi_n = 
 f_{i_1}^{\beta_1}
 f_{i_2}^{\beta_2}\cdots
 f_{i_n}^{\beta_n},
 \quad
 \Psi_n = 
 f_{i_1}^{\beta_1+\bra\beta_1,\mu\ket}
 f_{i_2}^{\beta_2+\bra\beta_2,\mu\ket}\cdots
 f_{i_n}^{\beta_n+\bra\beta_n,\mu\ket}.
\end{equation*}
This is equivalent to 
$\tau_{(w_n(\mu))} = \tw_n(\Phi_n^{-1}\Psi_n)\tau^{w_n(\mu)}$.

\begin{remark}
 In general, for $w\in W$ and $\mu\in P_+$, 
 there exists a unique $\phi_w(\mu)\in\A$ with
 $\tau_{(w(\mu))}=\phi_w(\mu)\tau^{w(\mu)}$.
 In \cite{NY9708018}, the classical version of $\phi_w(\mu)$ is called the $\tau$-cocycle.
 In the quantum case, $\phi_w(\mu)$ does not commute with $\tau^\mu$ in general.
 This is the reason why we does not deal with $\phi_w(\mu)$ but $\tau_{(w(\mu))}$. 
 \qed
\end{remark}

Let $\sigma:U^-\to U^-$ be the anti-algebra involution of $U_-$ 
which sends $f_i$ to $f_i$ ($i\in I$).
That is, the linear transformation $\sigma$ reverses 
the order of products of $f_i$'s in $U_-$.
Denote the unique extension of $\sigma$ to the anti-algebra involution 
of $\tU_-=U_-[f_i^{-1}|i\in I]$
by the same symbol.

Assume that $\lambda\in P$.
The Verma module $M(\lambda)$ is defined to be the left $U$-module 
generated by $v_\lambda$ with fundamental relations
$e_iv_\lambda=0$ ($i\in I$) , 
$h_iv_\lambda=\bra\av_i,\lambda\ket v_\lambda$ ($i\in I$)
in the Kac-Moody case and
$e_iv_\lambda=0$ ($i\in I$) , 
$q^{h_i}v_\lambda=q^{\bra\av_i,\lambda\ket}v_\lambda$ ($i\in I$)
in the $q$-difference case.
The vector $v_\lambda$ is called the highest weight vector of $M(\lambda)$.

For $\lambda\in P$, 
the highest weight simple module $L(\lambda)$ is defined to be
the unique simple quotient of the Verma module $M(\lambda)$.
The highest weight vector $u_\lambda$ of $L(\lambda)$
is defined to be the image of $v_\lambda\in M(\lambda)$ in $L(\lambda)$.
The simple module $L(\lambda)$ is integrable if and only if $\lambda\in P_+$.

Recall that the Weyl vector $\rho\in P_+$ satisfies $\bra\av_i,\rho\ket=1$ ($i\in I$).
Define the shifted action of the Weyl group on $P$ by
$w\circ\lambda = w(\lambda+\rho)-\rho$ ($w\in W$, $\lambda\in P$).

Assume that $\lambda\in P_+$ and $w\in W$.
We define $F_{w,\lambda}\in U_-$ by
\begin{equation*}
 F_{w,\lambda}
 =
 f_{j_m}^{\bra\av_{j_m},s_{m_{l-1}}\cdots s_{j_2}s_{j_1}\circ\lambda\ket+1}
 \cdots
 f_{j_2}^{\bra\av_{j_2},s_{j_1}\circ\lambda\ket+1}
 f_{j_1}^{\bra\av_{j_1},\lambda\ket+1},
\end{equation*}
where $w=s_{j_m}\cdots s_{j_2}s_{j_1}$ is a reduced expression of $w$.
Note that $F_{w,\lambda}$ is independent on the choice of the reduced
expression of $w$ owing to the Verma identities, 
and $F_{w,\lambda}v_\lambda$ is a singular vector 
with weight $w\circ\lambda$ in the Verma module $M(\lambda)$,
which is unique up to scalar multiples.
(For the uniqueness, see 4.4.15 of \cite{Jos-1995}.)
For $\lambda\in P_+$, 
the kernel of the canonical projection from $M(\lambda)$ onto $L(\lambda)$
is generated by \(
 \{F_{s_i,\lambda}v_\lambda=f_i^{\bra\av_i,\lambda\ket+1}v_\lambda\}_{i\in I}
\).

Using 
\(
 \bra\beta_k,\lambda+\rho\ket
 = \bra\av_{i_k},s_{i_{k-1}}\cdots s_{i_2}s_{i_1}\circ(\lambda)\ket+1
\) for $k=0,1,\ldots,N$, we obtain
\begin{align*}
 \sigma(\phi_{\lambda+\rho}(\Phi_n)) = F_{w_n,\lambda},
 \quad
 \sigma(\phi_{\lambda+\rho}(\Psi_n)) = F_{w_n,\lambda+\mu}.
\end{align*}
The quantum $\tau$-function $\tau_{(w_n(\mu))}$ 
and the singular vectors 
$F_{w_n,\lambda}v_\lambda\in M(\lambda)$, 
$F_{w_n,\lambda+\mu}v_{\lambda+\mu}\in M(\lambda+\mu)$
are related in this way.

For each $i\in I$, the multiplicative subset of $U_-$ generated 
by the single $f_i$ is an Ore set in $U_-$ 
owing to the Serre and $q$-Serre relations. 
Therefore we obtain the localization $U_-[f_i^{-1}]\subset\tU_-$ 
of $U_-$ with respect to it.

For each $i\in I$, we set 
$U_-[f_i^{-1}]^\pa=U_-[f_i^{-1}][\beta|\beta\in\Qv]$ in the Kac-Moody case and
$U_-[f_i^{-1}]^\pa=U_-[f_i^{-1}][q^\beta|\beta\in\Qv]$ in the $q$-difference case.
Then $U_-[f_i^{-1}]^\pa$ is a subalgebra of $\tU_-^\pa$.

For each $\lambda\in P$, the algebra homomorphism 
$\phi_\lambda:U_-[f_i^{-1}]^\pa\to U_-[f_i^{-1}]$ is defined to 
be the restriction of $\phi_\lambda:\tU_-^\pa=\tA^\pa\to\tA=\tU_-$
on $U_-[f_i^{-1}]^\pa$.

\begin{lemma}
\label{lemma:U_-[f_i^{-1}]^pa}
 For any $a\in U_-[f_i^{-1}]^\pa$, 
 if $\phi_{\lambda+\rho}(a)\in U_-$ for all $\lambda\in P_+$,
 then $a\in U_-^\pa$.
\end{lemma}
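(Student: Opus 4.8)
The plan is to strip off the negative powers of $f_i$ one at a time, reducing the whole statement to a single divisibility assertion inside $U_-^\pa$ that is then forced by the faithfulness of the evaluations $\phi_{\lambda+\rho}$.

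First I would write $a$ over a common right denominator. Since $\{f_i^k\}_{k\geqq0}$ is an Ore set in $U_-$ and the parameter variables are central and commute with $f_i$, every element of $U_-[f_i^{-1}]^\pa$ has the form $a=bf_i^{-N}$ with $b\in U_-^\pa$ and $N\geqq0$; choose $N$ minimal. If $N=0$ we are done, so assume $N\geqq1$. Minimality of $N$ says precisely that $b\notin U_-^\pa f_i$, i.e.\ $b$ is not right-divisible by $f_i$. Because $\phi_{\lambda+\rho}$ is an algebra homomorphism fixing $f_i$, the hypothesis $\phi_{\lambda+\rho}(a)\in U_-$ reads $\phi_{\lambda+\rho}(b)f_i^{-N}\in U_-$, and since $U_-$ is a domain this forces $\phi_{\lambda+\rho}(b)\in U_-f_i^N\subseteq U_-f_i$ for every $\lambda\in P_+$.

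The crux is thus the claim: if $b\in U_-^\pa$ and $\phi_{\lambda+\rho}(b)\in U_-f_i$ for all $\lambda\in P_+$, then $b\in U_-^\pa f_i$. To prove it I would use that $U_-^\pa=U_-\otimes_\C\Pi$, where $\Pi$ is the (Laurent) polynomial ring generated by the parameter variables, and that $f_i$ commutes with $\Pi$; hence $U_-^\pa f_i=(U_-f_i)\otimes_\C\Pi$ and the canonical projection $\pr_i\colon U_-\to U_-/U_-f_i$ lifts to $\pr_i\otimes\mathrm{id}\colon U_-^\pa\to(U_-/U_-f_i)\otimes_\C\Pi$. On this tensor product $\phi_{\lambda+\rho}$ acts as $\mathrm{id}\otimes\mathrm{ev}_{\lambda+\rho}$, so writing $(\pr_i\otimes\mathrm{id})(b)=\sum_j\bar v_j\otimes p_j$ with the $\bar v_j\in U_-/U_-f_i$ linearly independent over the base field, the hypothesis collapses to $\mathrm{ev}_{\lambda+\rho}(p_j)=0$ for every $j$ and every $\lambda\in P_+$. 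By the faithfulness principle recorded in the introduction, a (Laurent) polynomial in the parameter variables vanishing under all $\phi_{\lambda+\rho}$ ($\lambda\in P_+$) is identically zero: in the Kac-Moody case the tuples $(\bra\beta_k,\lambda+\rho\ket)_k$ are Zariski dense in affine space since $\rho$ is strictly dominant, and in the $q$-difference case one restricts $\lambda+\rho$ to a generic dominant ray $\rho+t\mu_0$ ($t\in\Z_{\geqq0}$) and runs a Vandermonde argument in $t$. Hence each $p_j=0$, so $(\pr_i\otimes\mathrm{id})(b)=0$, i.e.\ $b\in U_-^\pa f_i$. This contradicts the minimality of $N$, so in fact $N=0$ and $a\in U_-^\pa$.

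I expect the only genuine subtlety to be the bookkeeping behind the identification $U_-^\pa/U_-^\pa f_i\isom(U_-/U_-f_i)\otimes_\C\Pi$ together with the commutation of $\pr_i$ and $\phi_{\lambda+\rho}$; once that compatibility between $f_i$-divisibility and the parameter variables is in place, no representation theory enters and the conclusion is dictated entirely by the Ore structure and the density of the evaluation points.
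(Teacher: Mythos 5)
Your proof is correct, but it follows a genuinely different route from the paper's. The paper's proof invokes Lusztig's structure theorem (Section 38.1 of \cite{Lusztig}): with $U_-[i]$ the subalgebra generated by $\sigma((\ad f_i)^k(f_j))$ for $j\ne i$, one has $U_-=\bigoplus_{k\geqq0}U_-[i]f_i^k$, hence $U_-[f_i^{-1}]^\pa=\bigoplus_{k\in\Z}U_-[i]^\pa f_i^k$; writing $a=\sum_k a_kf_i^k$ in this grading, the hypothesis forces $\phi_{\lambda+\rho}(a_k)=0$ for all $k<0$ and all $\lambda\in P_+$, and then $a_k=0$ by the same evaluation-density principle you use. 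You avoid that structure theorem entirely: you only use the Ore property of $\{f_i^k\}$ (which the paper needs anyway just to define the localization) to get a common right denominator $a=bf_i^{-N}$ with $N$ minimal, and you isolate the negative part via the projection $U_-^\pa\to(U_-/U_-f_i)\otimes_\C\Pi$ rather than via a direct-sum decomposition of $U_-$ itself. Both arguments ultimately rest on the same faithfulness statement --- a (Laurent) polynomial in the parameter variables vanishing at all $\lambda+\rho$, $\lambda\in P_+$, is zero --- which the paper also uses silently in its final step. What the paper's route buys is an explicit $\Z$-graded picture of the localization tied to the enveloping-algebra structure; what your route buys is elementarity and generality, since it never uses that $U_-$ is $U(\n_-)$ or $U_q(\n_-)$ and would apply verbatim to any domain in which the powers of $f_i$ form an Ore set and the evaluation maps are jointly faithful. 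Two cosmetic points: the domain property of $U_-$ is not really needed to pass from $\phi_{\lambda+\rho}(b)f_i^{-N}\in U_-$ to $\phi_{\lambda+\rho}(b)\in U_-f_i^N$ (only injectivity of $U_-\hookrightarrow U_-[f_i^{-1}]$), and in the Kac--Moody case the Zariski density of $\rho+P_+$ comes from $P_+$ being the lattice points of a full-dimensional cone (linear independence of the $\av_i$), not from strict dominance of $\rho$.
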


\begin{proof}
 Let $U_-[i]$ be the subalgebra of $U_-$ generated by
 $\sigma((\ad f_i)^k(f_j))$ for $j\in I\setminus\{i\}$
 and $k=0,1,\ldots,-a_{ij}$.
 Then $U_- = \bigoplus_{k=0}^\infty U_-[i]f_i^k$
 (Section 38.1 of \cite{Lusztig}).
 Therefore we have $U_-[f_i^{-1}]=\bigoplus_{k\in\Z}U_-[i]f_i^k$.
 Put $U_-[i]^\pa=U_-[i][\av_j|j\in I]$ in the Kac-Moody case
 and $U_-[i]^\pa=U_-[i][q^\beta|\beta\in\Qv]$ in the $q$-difference case.
 Then we have $U_-[f_i^{-1}]^\pa=\bigoplus_{k\in\Z} U_-[i]^\pa f_i^k$.
 Therefore any $a\in U_-[f_i^{-1}]^\pa$ is uniquely expressed as
 $a=\sum_{k\in\Z}a_kf_i^k$ where all $a_k\in U_-[i]^\pa$ ($k\in \Z$) are
 zero except for finite number of $k$.
 Then $\phi_\lambda(a)=\sum_{k\in\Z} \phi_\lambda(a_k)f_i^k$.
 Therefore $\phi_{\lambda+\rho}(a)\in U_-$ for all $\lambda\in P_+$
 implies $a_k=0$ for all $k<0$.
 Thus we obtain $a\in U_-^\pa$.
 \qed
\end{proof}

\begin{lemma}
\label{lemma:F/F}
 If $F_{w_n,\lambda+\mu}\in U_-F_{w_n,\lambda}$ 
 for all $\lambda\in P_+$ and $n=0,1,2,\ldots,N$, 
 then the quantum $\tau$-function $\tau_{(w_n(\mu))}$ 
 is regular for each $n=0,1,\ldots,N$.
\end{lemma}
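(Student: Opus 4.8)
The plan is to prove the regularity by induction on $n$, reducing at each stage to \lemmaref{lemma:U_-[f_i^{-1}]^pa} through the singular-vector dictionary $\sigma(\phi_{\lambda+\rho}(\Phi_n))=F_{w_n,\lambda}$ and $\sigma(\phi_{\lambda+\rho}(\Psi_n))=F_{w_n,\lambda+\mu}$ established above. Since $\tau_{(w_n(\mu))}=\tw_n(\Phi_n^{-1}\Psi_n)\tau^{w_n(\mu)}$ and the tilde action $\tw_n$ sends $f_i\mapsto f_i$, $\beta\mapsto w_n(\beta)$ and hence preserves $U_-^\pa$, the regularity of $\tau_{(w_n(\mu))}$ is equivalent to the single assertion $a_n:=\Phi_n^{-1}\Psi_n\in U_-^\pa$. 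For $n=0$ we have $w_0=e$ and $\Phi_0=\Psi_0=1$, so $a_0=1$ and the base case is immediate.

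For the inductive step I would assume $a_{n-1}\in U_-^\pa$ and first pin down the localization in which $a_n$ lives. From $\Phi_n=\Phi_{n-1}f_{i_n}^{\beta_n}$ and $\Psi_n=\Psi_{n-1}f_{i_n}^{\beta_n+\bra\beta_n,\mu\ket}$ one rewrites
\[
 a_n=\bigl(f_{i_n}^{-\beta_n}a_{n-1}f_{i_n}^{\beta_n}\bigr)f_{i_n}^{\bra\beta_n,\mu\ket}
 =\Ad\bigl(f_{i_n}^{-\beta_n}\bigr)(a_{n-1})\cdot f_{i_n}^{\bra\beta_n,\mu\ket},
\]
namely as a conjugate of $a_{n-1}$ times an integral power of $f_{i_n}$. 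Because the parameter variables are central and $a_{n-1}$ is a polynomial in the $f_j$ and the coroots, \lemmaref{lemma:in-tApa} applied factorwise shows that each $f_{i_n}^{-\beta_n}f_jf_{i_n}^{\beta_n}$ lies in the subalgebra generated by $\{f_{i_n}^{\pm1},f_j,\beta_n\}$; hence $a_n\in U_-[f_{i_n}^{-1}]^\pa$. This is precisely the step that forces the induction: only the inductive hypothesis $a_{n-1}\in U_-^\pa$, carrying no inverse powers, guarantees that the conjugation introduces inverses of $f_{i_n}$ \emph{alone}, so that \lemmaref{lemma:U_-[f_i^{-1}]^pa} is applicable at the single index $i=i_n$.

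It then remains to verify the hypothesis $\phi_{\lambda+\rho}(a_n)\in U_-$ for all $\lambda\in P_+$, and here I would exploit the anti-involution $\sigma$. Since $\sigma$ reverses products, sends inverses to inverses, and restricts to an involution of $U_-$, the two stated identities give
\[
 \sigma\bigl(\phi_{\lambda+\rho}(a_n)\bigr)
 =\sigma\bigl(\phi_{\lambda+\rho}(\Psi_n)\bigr)\,\sigma\bigl(\phi_{\lambda+\rho}(\Phi_n)\bigr)^{-1}
 =F_{w_n,\lambda+\mu}\,F_{w_n,\lambda}^{-1}.
\]
The divisibility hypothesis $F_{w_n,\lambda+\mu}\in U_-F_{w_n,\lambda}$ makes the right-hand side an element of $U_-$, so applying $\sigma$ once more yields $\phi_{\lambda+\rho}(a_n)\in U_-$. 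Combining this with $a_n\in U_-[f_{i_n}^{-1}]^\pa$ from the previous paragraph, \lemmaref{lemma:U_-[f_i^{-1}]^pa} forces $a_n\in U_-^\pa$, which completes the induction and hence proves regularity of $\tau_{(w_n(\mu))}$ for every $n$.

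The main obstacle I anticipate is the bookkeeping that isolates the single denominator $f_{i_n}^{-1}$: one must control, through the inductive hypothesis together with \lemmaref{lemma:in-tApa}, exactly which inverse powers the conjugation $\Ad(f_{i_n}^{-\beta_n})$ can create, because \lemmaref{lemma:U_-[f_i^{-1}]^pa} is formulated for one localized generator at a time. By contrast, the passage through $\sigma$ is merely careful order-reversal, and the Kac-Moody and $q$-difference cases run in parallel since both enter symmetrically in \lemmaref{lemma:in-tApa} and in the identities $\sigma(\phi_{\lambda+\rho}(\Phi_n))=F_{w_n,\lambda}$, $\sigma(\phi_{\lambda+\rho}(\Psi_n))=F_{w_n,\lambda+\mu}$.
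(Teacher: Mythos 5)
Your proposal is correct and follows essentially the same route as the paper's own proof: induction on $n$ with the factorization $\Phi_n^{-1}\Psi_n=f_{i_n}^{-\beta_n}\Phi_{n-1}^{-1}\Psi_{n-1}f_{i_n}^{\beta_n+\bra\beta_n,\mu\ket}$, Lemma \ref{lemma:in-tApa} to place this in $U_-[f_{i_n}^{-1}]^\pa$, the identity $\sigma(\phi_{\lambda+\rho}(\Phi_n^{-1}\Psi_n))=F_{w_n,\lambda+\mu}F_{w_n,\lambda}^{-1}$ together with the divisibility hypothesis, and Lemma \ref{lemma:U_-[f_i^{-1}]^pa} to conclude $\Phi_n^{-1}\Psi_n\in U_-^\pa$. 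The only difference is that you spell out the base case, the factorwise application of Lemma \ref{lemma:in-tApa}, and the order-reversal under $\sigma$, which the paper leaves implicit.
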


\begin{proof}
 The quantum $\tau$-function $\tau_{(w_n(\mu))}$ is regular
 if and only if $\Phi_n^{-1}\Psi_n\in U_-^\pa$.
 Inductively we assume that $1\leqq n\leqq N$ 
 and $\Phi_{n-1}^{-1}\Psi_{n-1}\in U_-^\pa$.
 \lemmaref{lemma:in-tApa} leads to
 \begin{equation*}
  \Phi_n^{-1}\Psi_n
  = f_{i_n}^{-\beta_n}\Phi_{n-1}^{-1}\Psi_{n-1}f_{i_n}^{\beta_n+\bra\beta_n,\mu\ket}
  \in U_-[f_{i_n}^{-1}]^\pa.
 \end{equation*}
 We have \(
  \sigma(\phi_{\lambda+\rho}(\Phi_n^{-1}\Psi_n))
  = F_{w_n,\lambda+\mu}F_{w_n,\lambda}^{-1}
 \).
 By \lemmaref{lemma:U_-[f_i^{-1}]^pa}, 
 if $F_{w_n,\lambda+\mu}\in U_-F_{w_n,\lambda}$ for all $\lambda\in P_+$, 
 then $\Phi_n^{-1}\Psi_n\in U_-^\pa$.
 \qed
\end{proof}

\lemmaref{lemma:F/F} immediately leads to the following proposition.

\begin{prop}
\label{prop:reduce}
 If $F_{w,\lambda+\mu}\in U_- F_{w,\lambda}$ for all $\lambda,\mu\in P_+$
 and $w\in W$, then the all quantum $\tau$-functions $\tau_{(w(\mu))}$ 
 are regular. 
 \qed
\end{prop}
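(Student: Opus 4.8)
The plan is to obtain \propref{prop:reduce} as an immediate specialization of \lemmaref{lemma:F/F}. Fix an arbitrary point $\nu\in WP_+$ of the Tits cone and write $\nu=w(\mu)$ with $w\in W$ and $\mu\in P_+$. Because $\tau_{(\nu)}$ depends only on $\nu$ and not on the chosen representation $(w,\mu)$ (as recorded in \secref{sec:def-tau}), it suffices to prove that this particular $\tau_{(w(\mu))}$ is regular.

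First I would fix a reduced expression $w=s_{i_N}\cdots s_{i_2}s_{i_1}$ with $N=\ell(w)$ and form the chain of prefixes $w_n=s_{i_n}\cdots s_{i_1}$ for $n=0,1,\ldots,N$, so that $w_N=w$; these are exactly the data entering \lemmaref{lemma:F/F}. Each prefix $w_n$ lies in $W$, so the universally quantified hypothesis of \propref{prop:reduce} applies to it: specializing $w$ to $w_n$ and retaining the fixed $\mu$ yields $F_{w_n,\lambda+\mu}\in U_- F_{w_n,\lambda}$ for every $\lambda\in P_+$ and every $n=0,1,\ldots,N$. This is precisely the assumption of \lemmaref{lemma:F/F}, so that lemma gives the regularity of $\tau_{(w_n(\mu))}$ for all $n$, and in particular of $\tau_{(w_N(\mu))}=\tau_{(w(\mu))}=\tau_{(\nu)}$. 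As $\nu$ was arbitrary, all quantum $\tau$-functions are regular.

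There is no substantive obstacle at this final step: the genuine content has already been discharged in \lemmaref{lemma:F/F}, whose inductive argument along the reduced word uses \lemmaref{lemma:in-tApa} and \lemmaref{lemma:U_-[f_i^{-1}]^pa}. The only points demanding a moment's attention are the well-definedness of $\tau_{(\nu)}$, which lets me choose the pair $(w,\mu)$ and the reduced expression freely, and the observation that the prefixes of a reduced word are again elements of $W$ so the hypothesis covers them; both are immediate from the constructions of \secref{sec:def-tau} and \secref{sec:sing}.
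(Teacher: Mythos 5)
Your proposal is correct and follows exactly the paper's route: the paper states that \lemmaref{lemma:F/F} ``immediately leads to'' \propref{prop:reduce}, and your argument simply makes explicit the two small points (prefixes of a reduced word are reduced and are covered by the universally quantified hypothesis; $\tau_{(\nu)}$ is independent of the chosen presentation $\nu=w(\mu)$) that justify the word ``immediately.''
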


In this way, we can reduce the regularity of the quantum $\tau$-functions
to the divisibility (from the right) of $F_{w,\lambda+\mu}$ 
by $F_{w,\lambda}$ for $\lambda,\mu\in P_+$ and $w\in W$.


\subsection{Proof of regularity in the Kac-Moody case}
\label{sec:reg-KM}

In this subsection, 
we assume that $A=U_-=U(\n_-)$
and shall use the integral weight part ($P$-part) of the results of \cite{DGK}
on the BGG category for the Kac-Moody algebra $\g$.

A left $\g$-module $M$ is said to be integrally $\h$-diagonalizable 
if $M=\bigoplus_{\nu\in P}M_\nu$ 
where $M_\nu=\{\,v\in M\mid h_iv=\bra\av_i,\nu\ket v\ (i\in I) \,\}$ for $\nu\in P$.
We call $M_\nu$'s the weight subspaces of $M$ 
and $\nu$ a weight of $M$ if $M_\nu\ne 0$.

Let $\Oint$ be the category of left $\g$-modules $M$ satisfying 
the following conditions:
\begin{enumerate}
 \item[(A)]
  $M$ is integrally $\h$-diagonalizable with finite-dimensional weight spaces;
 \item[(B)]
  There exists finitely many weights $\mu_1,\ldots,\mu_n\in P$
  such that any weight of $M$ belongs to $\bigcup_{k=1}^n (\mu_k-Q_+)$.
\end{enumerate}
Then we have the Verma module $M(\lambda)$ and its simple quotient
$L(\lambda)$ are objects of $\Oint$ if $\lambda\in P$.
The simple module $L(\lambda)$ is integrable if and only if $\lambda\in P_+$.

Define the subset $\Kintg$ of the weight lattice $P$ by 
\begin{equation*}
 \Kintg
 = W\circ P_+ 
 = W(P_++\rho)-\rho
 = \{\, w\circ\lambda = w(\lambda+\rho)-\rho \mid w\in W, \lambda\in P_+\, \}.
\end{equation*}
Let $\Ointg$ be the full subcategory of $\Oint$ consisting of 
$\Oint$-objects all simple subquotients of which are 
isomorphic to $L(\nu)$ for some $\nu\in\Kintg$.
Then the Verma modules $M(w\circ\lambda)$ for $w\in W$ and $\lambda\in P_+$ 
are objects of $\Ointg$. 

For $\lambda\in P_+$, 
let $\O_\lambda$ be the full subcategory of $\Ointg$ consisting of 
$\Ointg$-objects all simple subquotients of which are 
isomorphic to $L(w\circ\lambda)$ for some $w\in W$.
Then any $\Ointg$-object decomposes uniquely 
as a direct sum of $\O_\lambda$-objects for $\lambda\in P_+$
(the integral part of Theorem 5.7 of \cite{DGK}).
For any $\Ointg$-object $M$, 
denote by $\pr_\lambda(M)$ the $\O_\lambda$-component of $M$.

For $\mu,\lambda\in P_+$, the translation functor 
$T^{\lambda+\mu}_\mu:\O_\lambda\to\O_{\lambda+\mu}$ is
defined by $T^\lambda_{\lambda+\mu}(M)=\pr_{\lambda+\mu}(M\otimes L(\mu))$
for $M\in\Ob\O_\lambda$. This functor is exact.
The following lemma is the integral part of Theorem 5.13 of \cite{DGK}.

\begin{lemma}
\label{lemma:TP-KM}
 For $\lambda,\mu\in P_+$ and $w\in W$, 
 the $\g$-module $T_\lambda^{\lambda+\mu}(M(w\circ\lambda))$
 is isomorphic to $M(w\circ(\lambda+\mu))$.
 \qed
\end{lemma}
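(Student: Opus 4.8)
The plan is to compute $T_\lambda^{\lambda+\mu}(M(w\circ\lambda)) = \pr_{\lambda+\mu}\bigl(M(w\circ\lambda)\otimes L(\mu)\bigr)$ by means of a standard (Verma) filtration of the tensor product. Since $\mu\in P_+$, the simple module $L(\mu)$ is integrable, and $M(w\circ\lambda)\otimes L(\mu)$ admits a filtration whose successive quotients are the Verma modules $M(w\circ\lambda+\xi)$, one for each weight $\xi$ of $L(\mu)$ counted with multiplicity $\dim L(\mu)_\xi$. The projection $\pr_{\lambda+\mu}$ is exact, being projection onto the block summand $\O_{\lambda+\mu}$ in the decomposition (the integral part of Theorem 5.7 of \cite{DGK}); applied to a Verma module it returns $M(\eta)$ when $\eta\in W\circ(\lambda+\mu)$ and $0$ otherwise. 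Hence $T_\lambda^{\lambda+\mu}(M(w\circ\lambda))$ inherits a filtration whose nonzero sections are exactly those $M(w\circ\lambda+\xi)$ with $w\circ\lambda+\xi\in W\circ(\lambda+\mu)$, and the whole argument reduces to determining which weights $\xi$ of $L(\mu)$ satisfy this linkage condition.

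First I would exhibit the expected section. Writing $\hat\lambda=\lambda+\rho$, one has $w\circ\lambda+w(\mu) = w(\hat\lambda)-\rho+w(\mu) = w(\hat\lambda+\mu)-\rho = w\circ(\lambda+\mu)$, so the extremal weight $\xi=w(\mu)$ of $L(\mu)$ does satisfy the linkage condition and contributes the section $M(w\circ(\lambda+\mu))$; moreover $\dim L(\mu)_{w(\mu)}=1$ because $w(\mu)$ is extremal. It then remains to prove the \emph{uniqueness}: if $\xi$ is a weight of $L(\mu)$ with $w\circ\lambda+\xi\in W\circ(\lambda+\mu)$, then $\xi=w(\mu)$.

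For uniqueness I would add $\rho$ to pass to the linear action: the hypothesis reads $w(\hat\lambda)+\xi = w'(\hat\lambda+\mu)$ for some $w'\in W$, where $\hat\lambda=\lambda+\rho$ is regular dominant (indeed $\bra\av_i,\hat\lambda\ket\geqq1$). Applying $w'^{-1}$ gives $w'^{-1}(\xi) = \hat\lambda+\mu-(w'^{-1}w)(\hat\lambda)$. Since the set of weights of $L(\mu)$ is $W$-invariant and bounded above by $\mu$, the weight $w'^{-1}(\xi)$ satisfies $\mu-w'^{-1}(\xi)\in Q_+$, whence $(w'^{-1}w)(\hat\lambda)-\hat\lambda\in Q_+$. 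On the other hand $\hat\lambda$ is dominant, so $u(\hat\lambda)-\hat\lambda\in -Q_+$ for every $u\in W$; taking $u=w'^{-1}w$ and using that $Q_+\cap(-Q_+)=\{0\}$ (the simple roots being linearly independent, as assumed in \secref{sec:def-reg}) forces $(w'^{-1}w)(\hat\lambda)=\hat\lambda$. As $\hat\lambda$ is regular its stabilizer in $W$ is trivial, so $w'=w$ and therefore $\xi=w(\mu)$. Consequently the filtration of $T_\lambda^{\lambda+\mu}(M(w\circ\lambda))$ has a single section, namely $M(w\circ(\lambda+\mu))$, and the claimed isomorphism follows.

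The combinatorial uniqueness above is short and robust; the real weight of the proof, and the step I expect to be the main obstacle, is the categorical infrastructure in the Kac-Moody setting: that $M(w\circ\lambda)\otimes L(\mu)$ again lies in $\Ointg$ and carries a Verma filtration, and that the block projection $\pr_{\lambda+\mu}$ is exact and selects precisely the linked sections. These facts are exactly what the integral part of the category-$\O$ theory of \cite{DGK} supplies, and I would invoke that machinery rather than reprove it.
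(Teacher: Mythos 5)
Your proposal is correct, but it does considerably more work than the paper, which offers no proof of \lemmaref{lemma:TP-KM} at all: the lemma is simply quoted as the integral part of Theorem 5.13 of \cite{DGK}, with the \(\qed\) placed directly after the statement. What you have written is essentially a reconstruction of the standard proof of that cited theorem. Your combinatorial core is complete and correct: the identity $w\circ\lambda+w(\mu)=w\circ(\lambda+\mu)$ exhibits the expected section with multiplicity $\dim L(\mu)_{w(\mu)}=1$, and the uniqueness argument --- $W$-invariance of the weights of $L(\mu)$, the bound $\mu-w'^{-1}(\xi)\in Q_+$, dominance of $\hat\lambda=\lambda+\rho$ giving $u(\hat\lambda)-\hat\lambda\in -Q_+$ for all $u\in W$, the intersection $Q_+\cap(-Q_+)=\{0\}$ (correctly tied to the linear independence of $\{\alpha_i\}_{i\in I}$ assumed in \secref{sec:def-reg}), and triviality of the stabilizer of the regular dominant weight $\hat\lambda$ --- is airtight. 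The pieces you explicitly delegate to \cite{DGK} are exactly the right ones to delegate: the existence of a (possibly infinite, but exhaustive) Verma filtration of $M(w\circ\lambda)\otimes L(\mu)$ in the Kac-Moody setting, and the fact that the block decomposition is defined on a category large enough that $\pr_{\lambda+\mu}$ is exact and annihilates every Verma section whose highest weight is not in $W\circ(\lambda+\mu)$, including sections whose highest weight may fall outside $\Kintg$ altogether; these are precisely what Section 5 of \cite{DGK} establishes. In short, the paper buys the lemma by citation, which keeps \secref{sec:reg-KM} short and pushes all representation-theoretic content into the reference; your route makes the translation principle self-contained modulo the category-$\O$ infrastructure, at the cost of reproving what the cited source already supplies.
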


For details of the theory of translation functor for the Kac-Moody
Lie algebras, see \cite{DGK}. (See also Section 2 of \cite{KW}.)

The following theorem is a canonically quantized version 
of Theorem 1.3 of \cite{NY0012028}.

\begin{theorem}
\label{theorem:regularity-KM}
 In the Kac-Moody case, 
 the all quantum $\tau$-functions $\tau_{(\nu)}$ for $\nu\in WP_+$
 are regular. 
 More precisely they are polynomials in $\{f_i,\av_i\}_{i\in I}$.
\end{theorem}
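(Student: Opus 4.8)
The plan is to combine the reduction already packaged in \propref{prop:reduce} with the translation-functor input of \lemmaref{lemma:TP-KM}. By \propref{prop:reduce} it suffices to prove the divisibility $F_{w,\lambda+\mu}\in U_-F_{w,\lambda}$ for all $\lambda,\mu\in P_+$ and $w\in W$. First I would recast this module-theoretically. Since $M(\lambda)=U_-v_\lambda$ is free of rank one over $U_-$, the map $u\mapsto uv_\lambda$ identifies $U_-$ with $M(\lambda)$ and carries the left ideal $U_-F_{w,\lambda}$ onto the submodule $N_\lambda:=U_-(F_{w,\lambda}v_\lambda)$ generated by the singular vector $F_{w,\lambda}v_\lambda$ of weight $w\circ\lambda$; this $N_\lambda$ is the image of the (unique up to scalar) Verma embedding $\theta_\lambda\colon M(w\circ\lambda)\hookrightarrow M(\lambda)$. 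Under this identification the divisibility is \emph{equivalent} to the single containment $F_{w,\lambda+\mu}v_\lambda\in N_\lambda$, and likewise $N_{\lambda+\mu}=\mathrm{im}\,\theta_{\lambda+\mu}$ for $\theta_{\lambda+\mu}\colon M(w\circ(\lambda+\mu))\hookrightarrow M(\lambda+\mu)$.

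Next I would push the embeddings through the translation functor $T=T_\lambda^{\lambda+\mu}$. Both $M(\lambda)$ and $N_\lambda\cong M(w\circ\lambda)$ lie in the block $\O_\lambda$, so $T$ applies; being exact, it sends the injection $\theta_\lambda$ to an injection $T(\theta_\lambda)\colon T(M(w\circ\lambda))\hookrightarrow T(M(\lambda))$. By \lemmaref{lemma:TP-KM} (used for $w$ and for $w=1$) the source and target are canonically $M(w\circ(\lambda+\mu))$ and $M(\lambda+\mu)$, and $T(\theta_\lambda)$ is nonzero, hence by uniqueness of homomorphisms between Verma modules (4.4.15 of \cite{Jos-1995}) a nonzero scalar multiple of $\theta_{\lambda+\mu}$. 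Therefore $T(N_\lambda)=N_{\lambda+\mu}$, and applying the exact functor $T$ to $0\to N_\lambda\to M(\lambda)\to M(\lambda)/N_\lambda\to 0$ identifies $\bar M_\lambda:=M(\lambda)/N_\lambda$ with $\bar M_{\lambda+\mu}:=M(\lambda+\mu)/N_{\lambda+\mu}$, matching highest weight vectors.

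To finish I would make $T$ concrete and exploit that a highest weight vector lives in a single block. Writing $T(\bar M_\lambda)=\pr_{\lambda+\mu}(\bar M_\lambda\otimes L(\mu))$, the vector $\bar v_\lambda\otimes u_\mu$, where $u_\mu$ is the highest weight vector of $L(\mu)$, is itself a highest weight vector of weight $\lambda+\mu$, so $U(\g)(\bar v_\lambda\otimes u_\mu)$ is a highest weight module lying entirely in $\O_{\lambda+\mu}$; hence the block projection is the identity on it, it lies in $T(\bar M_\lambda)$, and the isomorphism $T(\bar M_\lambda)\cong\bar M_{\lambda+\mu}$ sends $\bar v_\lambda\otimes u_\mu$ to $\bar v_{\lambda+\mu}$. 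Acting by $F_{w,\lambda+\mu}$ and using $F_{w,\lambda+\mu}\bar v_{\lambda+\mu}=0$ yields the \emph{exact} identity $F_{w,\lambda+\mu}\cdot(\bar v_\lambda\otimes u_\mu)=0$ in $\bar M_\lambda\otimes L(\mu)$. Now expand by $\Delta(f_i)=f_i\otimes 1+1\otimes f_i$, so that $\Delta(F_{w,\lambda+\mu})=\sum_k P_k\otimes Q_k\in U_-\otimes U_-$ and $F_{w,\lambda+\mu}\cdot(\bar v_\lambda\otimes u_\mu)=\sum_k (P_k\bar v_\lambda)\otimes(Q_k u_\mu)$. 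Grading $\bar M_\lambda\otimes L(\mu)$ by the $L(\mu)$-weight and reading off the top piece $L(\mu)_\mu=\C u_\mu$, the only surviving contribution is that with $Q_k$ constant, which by $(\mathrm{id}\otimes\epsilon)\Delta=\mathrm{id}$ equals $(F_{w,\lambda+\mu}\bar v_\lambda)\otimes u_\mu$. Hence $F_{w,\lambda+\mu}\bar v_\lambda=0$, i.e. $F_{w,\lambda+\mu}v_\lambda\in N_\lambda$, which is exactly the required divisibility.

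I expect the hardest step to be the second one, the identity $T(N_\lambda)=N_{\lambda+\mu}$, since this is where the real representation-theoretic content sits: it rests on \lemmaref{lemma:TP-KM} (that $T$ carries Verma modules along the dot-orbit to Verma modules) together with exactness and the uniqueness of Verma homomorphisms, and one must check carefully that $T(\theta_\lambda)$ is nonzero and that the identifications of its source and target are the canonical ones. Once $\bar M_\lambda\cong\bar M_{\lambda+\mu}$ is established, the concluding device—observing that $\bar v_\lambda\otimes u_\mu$ lies in one block to upgrade the functorial isomorphism to an exact vector identity, then extracting the leading $L(\mu)$-weight term via the counit—is routine, and \propref{prop:reduce} then delivers the regularity, so that every $\tau_{(\nu)}$ is a polynomial in $\{f_i,\av_i\}_{i\in I}$.
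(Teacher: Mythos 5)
Your proposal is correct and takes essentially the same route as the paper's proof: the reduction via \propref{prop:reduce}, the application of the exact translation functor $T_\lambda^{\lambda+\mu}$ together with \lemmaref{lemma:TP-KM} and the uniqueness (up to scalar) of homomorphisms between Verma modules, and the final extraction of the top $L(\mu)$-weight component. The only difference is cosmetic: the paper states the conclusion as a containment $F_{w,\lambda+\mu}(v_\lambda\otimes u_\mu)\in M(w\circ\lambda)\otimes L(\mu)$ and expands in a weight basis of $L(\mu)$, whereas you pass to the quotients $M(\lambda)/N_\lambda$ and $M(\lambda+\mu)/N_{\lambda+\mu}$ and extract the leading term via the counit---this is the same computation in dual form.
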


\begin{proof}
 By \propref{prop:reduce} it is sufficient to show
 $F_{w,\lambda+\mu}\in U_- F_{w,\lambda}$
 for $\lambda,\mu\in P_+$ and $w\in W$.
 Fix $\lambda,\mu\in P_+$ and $w\in W$.
 Denote $T_\lambda^{\lambda+\mu}$ by $T$.

 Since $F_{w,\lambda}v_\lambda$ is 
 a singular vector with weight $w\circ\lambda$,
 we can identify $M(w\circ\lambda)$ 
 with $U_-F_{w,\lambda}v_\lambda\subset M(\lambda)$.
 We can also identify $M(\lambda+\mu)$ 
 with $U_-(v_\lambda\otimes u_\mu)\subset M(\lambda)\otimes L(\mu)$,
 and $M(w\circ(\lambda+\mu))$ 
 with $\subset M(\lambda+\mu)$.

 By the exactness of the translation functor, we can regard   
 $T(M(w\circ\lambda))$ as a submodule of $T(M(\lambda))$.
 By \lemmaref{lemma:TP-KM} and 
 the uniqueness (up to scalar multiples) of the non-zero homomorphism 
 from $M(w\circ(\lambda+\mu))$ to $M(\lambda+\mu)$, 
 we obtain \(
   T(M(w\circ\lambda))=M(w\circ(\lambda+\mu))
   =U_-F_{w,\lambda+\mu}(v_\lambda\otimes u_\mu)
 \).  Therefore
 \begin{equation*}
  F_{w,\lambda+\mu}(v_\lambda\otimes u_\mu)
  \in M(w\circ(\lambda+\mu)) 
  = T(M(w\circ\lambda)) 
  \subset M(w\circ\lambda)\otimes L(\mu).
 \end{equation*}
 On the other hand, we can rewrite $F_{w,\lambda+\mu}(v_\lambda\otimes u_\mu)$
 in the following form:
 \begin{equation*}
  F_{w,\lambda+\mu}(v_\lambda\otimes u_\mu)
  = (F_{w,\lambda+\mu}v_\lambda)\otimes u_\mu 
  + \sum_{k} a_k\otimes u_k,
 \end{equation*}
 where $\{u_k\}$ is a basis of $\bigoplus_{\nu\ne\mu}L(\mu)_\nu$
 and $a_k$'s are some elements of $M(\lambda)$.
 Therefore $F_{w,\lambda+\mu}v_\lambda$ and $a_k$'s belong to 
 $M(w\circ\lambda)=U_-F_{w,\lambda}v_\lambda$.
 In particular, $F_{w,\lambda+\mu}\in U_- F_{w,\lambda}$.
 \qed
\end{proof}

\begin{remark}
 The classical limit of \theoremref{theorem:regularity-KM}
 gives another proof of the regularity of the classical $\tau$-functions
 (Theorem 1.3 of \cite{NY0012028}). 
 In \cite{NY0012028}, Noumi and Yamada proves the regularity of
 the classical $\tau$-functions by using the idea of the Sato theory of soliton equations
 \cite{Sato-Sato}. Our method is completely different from it.
 \qed 
\end{remark}


\subsection{Proof of regularity in the $q$-difference case}
\label{sec:reg-q}

In this subsection, we fix an arbitrary complex number $\hbar$ which is not a root of unity.
In order to prove the regularity of the quantum $\tau$-functions
in the $q$-difference case, it is enough to show 
the regularity in the case where $q$ is specialized at $e^\hbar$.

Let $U_\hbar(\g)$ be the associative algebra over $\C$ generated by 
$\{\,e_i,f_i,q^\beta\mid i\in I, \beta\in\Qv \,\}$ with the same fundamental
relations of $U_q(\g)$ specialized at $q=e^\hbar$. 
For each $\lambda\in P$, the Verma module $M_\hbar(\lambda)$ over $U_\hbar(\g)$ and 
its simple quotient $L_\hbar(\lambda)$ are similarly defined 
as in the case of $U_q(\g)$.
The simple module $L_\hbar(\lambda)$ is integrable if and only if $\lambda\in P_+$.
For $\lambda\in P_+$ and $w\in W$, there exists a non-zero $U_\hbar(\g)$-homomorphism 
from $M_\hbar(w\circ\lambda)$ to $M_\hbar(\lambda)$,  
which is injective and unique up to scalar multiples.
For the uniqueness, see 4.4.15 of \cite{Jos-1995}.
Although it deal with the case where $q$ is an indeterminate,
its proof holds for $q=e^\hbar\in\C^\times$ not a root of unity
(Section 2.4 of \cite{HK-2007}).
We regard $M_\hbar(w\circ\lambda)$ as a submodule of $M_\hbar(\lambda)$.

In the following, we assume that $\lambda,\mu\in P_+$ and $w\in W$.

Since the analogue of \propref{prop:reduce} for $U_\hbar(\g)$ also holds,
the regularity of the quantum $\tau$-functions in the $q$-difference case
specialized at $q=e^\hbar$ can be also obtained by the same argument 
as in \secref{sec:reg-KM} 
if there exists $U_\hbar(\g)$-modules $T_\hbar(\lambda)$ and $T_\hbar(w\circ\lambda)$
satisfying the following conditions:
\begin{enumerate}
\item[(a)] $T_\hbar(\lambda)$ is a submodule 
  of $M_\hbar(\lambda)\otimes L_\hbar(\mu)$
  and isomorphic to $M_\hbar(\lambda+\mu)$.
\item[(b)] $T_\hbar(w\circ\lambda)$ is a submodule 
  of $M_\hbar(w\circ\lambda)\otimes L_\hbar(\mu)$
  and isomorphic to $M_\hbar(w\circ(\lambda+\mu))$.
\item[(c)] $T_\hbar(w\circ\lambda)$ is a submodule of $T_\hbar(\lambda)$.
\end{enumerate}
Under the conditions above, replacing $T(M(\lambda))$ and $T(M(w\circ\lambda))$
in the proof of \theoremref{theorem:regularity-KM} 
with $T_\hbar(\lambda)$ and $T_\hbar(w\circ\lambda)$ respectively, 
we obtain the regularity of the $q$-difference version of 
the quantum $\tau$-function $\tau_{(w(\mu))}=w(\tau^\mu)$ specialized at $q=e^\hbar$.

A left $U_\hbar(\g)$-module $M$ is said to be integrally $\h$-diagonalizable 
if $M=\bigoplus_{\nu\in P}M_\nu$ where the weight subspaces are defined 
by $M_\nu=\{\,v\in M\mid q^\beta v=e^{\hbar\bra\beta,\nu\ket}v\ (\beta\in\Qv) \,\}$
for $\nu\in P$.
Let $\Ointh$ be the category of left $U_\hbar(\g)$-modules $M$ satisfying 
the conditions (A) and (B) in \secref{sec:reg-KM}.
Then, for each $\lambda\in P$, the Verma module $M_\hbar(\lambda)$ over $U_\hbar(\g)$ 
and its simple quotient $L_\hbar(\lambda)$ are objects of $\Ointh$.

Let $\{g_k\}$ and $\{g^k\}$ be the dual bases 
of the symmetrizable Kac-Moody algebra $\g$ with respect to 
the canonical non-degenerate symmetric invariant bilinear form on $\g$.
Set $\Omega = \sum_k g_k\otimes g^k$. 
Following Drinfeld \cite{Drinfeld}, using the associator, 
we can define on $\Oint$ the structure of a braided tensor category 
with braiding $e^{\hbar\Omega}$.
(For details, see \cite{Drinfeld} and \cite{EK-I}.)
Since $\hbar$ is not a root of unity, 
the universal $R$-matrix $\mathcal{R}$ for $U_\hbar(\g)$ is well-defined and 
the category $\Ointh$ is also a braided tensor category with 
braiding defined by $\mathcal{R}$.

By Theorem 4.10 of \cite{EK-VI},
there exists a braided tensor functor $F_\hbar:\Oint\to\Ointh$,
which is the identity functor at the level of $P$-graded vector spaces
and preserves the Verma modules and the integrable simple modules.
In particular, we have
\begin{align*}
 &
 F_\hbar(M(\lambda)) = M_\hbar(\lambda), \quad
 F_\hbar(M(w\circ\lambda)) = M_\hbar(w\circ\lambda), \quad
 F_\hbar(L(\mu)) = L_\hbar(\mu),
 \\ &
 F_\hbar(M(\lambda)\otimes L(\mu)) = M_\hbar(\lambda)\otimes L_\hbar(\mu), \quad
 F_\hbar(M(w\circ\lambda)\otimes L(\mu)) = M_\hbar(w\circ\lambda)\otimes L_\hbar(\mu),
 \\ &
 F_\hbar(M(\lambda+\mu)) = M_\hbar(\lambda+\mu), \quad
 F_\hbar(M(w\circ(\lambda+\mu))) = M_\hbar(w\circ(\lambda+\mu)).
\end{align*}
Define the $U_\hbar(\g)$-modules 
$T_\hbar(\lambda)$ and $T_\hbar(w\circ\lambda)$ by
\begin{equation*}
 T_\hbar(\lambda) = F_\hbar(T_\lambda^{\lambda+\mu}(M(\lambda))), \quad
 T_\hbar(w\circ\lambda) = F_\hbar(T_\lambda^{\lambda+\mu}(M(w\circ\lambda))).
\end{equation*}
Then the conditions (a), (b), and (c) are satisfied.
It concludes the following theorem.

\begin{theorem}
\label{theorem:regularity-q}
 Also in the $q$-difference case, 
 the all quantum $\tau$-functions $\tau_{(\nu)}$ ($\nu\in WP_+$)
 are regular. 
 More precisely they are polynomials in $\{f_i,q^{\pm\av_i}\}_{i\in I}$.
 \qed
\end{theorem}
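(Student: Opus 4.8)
The plan is to deduce \theoremref{theorem:regularity-q} from the Kac-Moody result \theoremref{theorem:regularity-KM} by transporting the translation-functor argument across the Etingof--Kazhdan equivalence. First I would reduce to the specialized situation $q=e^\hbar$ for a fixed $\hbar\in\C$ that is not a root of unity: since every quantum $\tau$-function is a priori an element of $\tA^\pa$ whose coefficients are rational in $q$, the absence of negative powers of the $f_i$ over $\C(q)$ follows once it is known after specializing $q$ at all such $e^\hbar$, for otherwise a nonzero coefficient would be a rational function of $q$ vanishing at infinitely many points. Having fixed $\hbar$, the analogue of \propref{prop:reduce} for $U_\hbar(\g)$ reduces the whole statement to the single divisibility
\[
 F_{w,\lambda+\mu}\in U_- F_{w,\lambda}
 \qquad(\lambda,\mu\in P_+,\ w\in W),
\]
exactly as in the Kac-Moody case, where now $U_-$ is $U_q(\n_-)$ specialized at $q=e^\hbar$.

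The heart of the matter is to manufacture, inside $M_\hbar(\lambda)\otimes L_\hbar(\mu)$, a copy of the translation module that behaves as in \lemmaref{lemma:TP-KM}. I would not attempt to build a translation functor directly on $\Ointh$; instead I would pull the Kac-Moody translation module back through the braided tensor functor $F_\hbar\colon\Oint\to\Ointh$ of Etingof and Kazhdan (Theorem 4.10 of \cite{EK-VI}). Because $F_\hbar$ is the identity on $P$-graded vector spaces, is monoidal, and carries Verma modules to Verma modules and integrable simples to integrable simples, setting
\[
 T_\hbar(\lambda)=F_\hbar\bigl(T_\lambda^{\lambda+\mu}(M(\lambda))\bigr),
 \qquad
 T_\hbar(w\circ\lambda)=F_\hbar\bigl(T_\lambda^{\lambda+\mu}(M(w\circ\lambda))\bigr)
\]
automatically produces submodules of $M_\hbar(\lambda)\otimes L_\hbar(\mu)$ and of $M_\hbar(w\circ\lambda)\otimes L_\hbar(\mu)$ respectively. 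The exactness of $T_\lambda^{\lambda+\mu}$ together with \lemmaref{lemma:TP-KM} then yields the isomorphisms $T_\hbar(\lambda)\isom M_\hbar(\lambda+\mu)$ and $T_\hbar(w\circ\lambda)\isom M_\hbar(w\circ(\lambda+\mu))$, and the functoriality of the construction gives the inclusion $T_\hbar(w\circ\lambda)\subset T_\hbar(\lambda)$; these are precisely conditions (a)--(c) recorded in \secref{sec:reg-q}.

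With (a)--(c) in hand I would rerun the proof of \theoremref{theorem:regularity-KM} verbatim, replacing $T(M(\lambda))$ and $T(M(w\circ\lambda))$ by $T_\hbar(\lambda)$ and $T_\hbar(w\circ\lambda)$: expanding $F_{w,\lambda+\mu}(v_\lambda\otimes u_\mu)$ along a weight basis of $L_\hbar(\mu)$ and projecting onto the top component shows $F_{w,\lambda+\mu}v_\lambda\in U_-F_{w,\lambda}v_\lambda=M_\hbar(w\circ\lambda)$, hence the desired divisibility $F_{w,\lambda+\mu}\in U_-F_{w,\lambda}$. The main obstacle is purely the compatibility bookkeeping for $F_\hbar$: one must check that it is genuinely monoidal on the tensor products in play, so that the submodule structure and the inclusion $T_\hbar(w\circ\lambda)\subset T_\hbar(\lambda)$ are preserved, and that the uniqueness up to scalars of Verma-to-Verma homomorphisms survives the specialization $q=e^\hbar$ away from roots of unity. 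The latter is exactly the point at which I would invoke 4.4.15 of \cite{Jos-1995}, together with the observation (Section 2.4 of \cite{HK-2007}) that its proof persists for generic $q=e^\hbar\in\C^\times$.
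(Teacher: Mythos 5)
Your proposal is correct and follows essentially the same route as the paper: reduce to the specialization $q=e^\hbar$ with $\hbar$ not a root of unity, invoke the $U_\hbar(\g)$-analogue of \propref{prop:reduce}, transport the Kac-Moody translation modules through the Etingof--Kazhdan braided tensor functor $F_\hbar$ to obtain $T_\hbar(\lambda)$ and $T_\hbar(w\circ\lambda)$ satisfying conditions (a)--(c), and then rerun the proof of \theoremref{theorem:regularity-KM}. The only difference is cosmetic: you supply an explicit justification (a rational function of $q$ vanishing at infinitely many points is zero) for the reduction to $q=e^\hbar$, which the paper asserts without proof.
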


This theorem is not only a canonically quantized version of Theorem 1.3 of \cite{NY0012028} 
but also its $q$-difference analogue. 


\end{document}